\definecolor{gray}{rgb}{0.25, 0.25, 0.25}
\newtheorem{theorem}{Theorem}[section]
\newtheorem{lemma}[theorem]{Lemma}
\newtheorem{cor}[theorem]{Corollary}
\newtheorem{conj}[theorem]{Conjecture}
\newtheorem*{theorem*}{Theorem}
\theoremstyle{definition}
\theoremstyle{plain}
\newtheorem{prop}[theorem]{Proposition}
\theoremstyle{definition}
\theoremstyle{definition}
\theoremstyle{definition}
\theoremstyle{definition}
\newtheorem{defn}[theorem]{Definition}
\theoremstyle{definition}
\theoremstyle{definition}
\DeclareTextCompositeCommand{\v}{OT1}{l}{l\nobreak\hspace{-.1em}'}% This line is for the name Kral in ref[12]
\title{Sidorenko's conjecture for subdivisions and theta substitutions}
\author{
Seonghyuk Im\thanks{Department of Mathematical Sciences, KAIST, South Korea and Extremal Combinatorics and Probability Group (ECOPRO), Institute for Basic Science (IBS), Daejeon, South Korea.  Supported by the National Research Foundation of Korea (NRF) grant funded by the Korea government(MSIT) No. RS-2023-00210430 and by the Institute for Basic Science IBS-R029-C4. E-mail: {\tt seonghyuk@kaist.ac.kr}}
\and
Ruonan Li\thanks{School of Mathematics and Statistics, Northwestern Polytechnical University, Xi'an, Shaanxi, P.R.~China. Supported by National Natural Science Foundation of China 11901459 and 12131013, China Scholarship Council 202306290113 and the Institute for Basic Science IBS-R029-C4. Email: {\tt rnli@nwpu.edu.cn}}
\and 
Hong Liu\thanks{Extremal Combinatorics and Probability Group (ECOPRO),  Institute for Basic Science (IBS), Daejeon, South Korea. Supported by Institute for Basic Science IBS-R029-C4. Email: {\tt hongliu@ibs.re.kr}}
}
\date{\today}
\begin{document}
\maketitle
\begin{abstract}
    The famous Sidorenko's conjecture asserts that for every bipartite graph $H$, the number of homomorphisms from $H$ to a graph $G$ with given edge density is minimized when $G$ is pseudorandom. We prove that for any graph $H$, a graph obtained from replacing edges of $H$ by generalized theta graphs consisting of even paths satisfies Sidorenko's conjecture, provided a certain divisibility condition on the number of paths. To achieve this, we prove unconditionally that bipartite graphs obtained from replacing each edge of a complete graph with a generalized theta graph satisfy Sidorenko's conjecture, which extends a result of Conlon, Kim, Lee and Lee [J. Lond. Math. Soc., 2018].
\end{abstract}

\section{Introduction}
One fundamental problem in extremal graph theory is to determine the maximum/minimum possible number of $H$ copies in graphs of given edge density. A major conjecture by Sidorenko~\cite{Sidorenko1993, Sidorenko1991} and independently Erd\H{o}s and Simonovits~\cite{Erdos_sidorenkoconj_1984} states that for every bipartite graph $H$, this number is asymptotically minimized by the random graph of the same edge density. Formally, for graphs $H$ and $G$, a \emph{homomorphism} from $H$ to $G$ is a function $f:V(H) \rightarrow V(G)$ such that $f(u)f(v) \in E(G)$ whenever $uv \in E(H)$.
Let $hom(H, G)$ be the number of homomorphisms from $H$ to $G$.
The \emph{homomorphism density} of $H$ in $G$, denoted by $t_H(G)$, is defined by $\frac{hom(H, G)}{v(G)^{v(H)}}$.
If $v(G)$ is sufficiently large compared to $v(H)$, then the homomorphism density is asymptotically the same as the subgraph density. Sidorenko's conjecture is stated as follows.
\begin{conj}[Sidorenko's conjecture]\label{conj:Sidorenko}
    For every bipartite graph $H$ and every graph $G$, we have 
    \begin{equation}\label{eqt:sidorenko}
        t_H(G) \geq t_{K_2}(G)^{e(H)}.
    \end{equation}
\end{conj}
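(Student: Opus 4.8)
The statement above is the celebrated Sidorenko conjecture, so what follows is a strategy for a complete proof rather than a short argument; I would attack it through the entropy/information\-theoretic framework, which to date has yielded the widest range of special cases (trees, even cycles, complete bipartite graphs, hypercubes, bipartite graphs with a vertex complete to one side, subdivisions, norming graphs, and the theta/clique substitutions of the present paper). First I would reduce to graphons: by Lov\'asz--Szegedy limit theory, \eqref{eqt:sidorenko} for all $G$ is equivalent to $t_H(W)\ge p^{\,e(H)}$ for every graphon $W\colon[0,1]^2\to[0,1]$ with $p=\int_{[0,1]^2}W$. A tensor-power trick ($W\mapsto W^{\otimes k}$, under which $t_H(W^{\otimes k})=t_H(W)^k$ while $p$ becomes $p^k$) shows it is enough to prove the inequality up to a subexponential factor; in particular one may first handle the \emph{local} version near the constant graphon, $W=p\cdot(1+\varepsilon U)$ with $\int U=0$, where a second-order expansion in $\varepsilon$ turns local-Sidorenko into a spectral positivity statement for $U$. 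This reduction is essentially free; the difficulty is the global statement.

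The core of the plan is an entropy-maximisation argument. Fix a bipartite $H$ with parts $A,B$ and $V(H)=\{v_1,\dots,v_n\}$ in some order. Sample a random map $\phi\colon V(H)\to[0,1]$ by revealing $\phi(v_i)$ one step at a time, drawing $\phi(v_i)$ at step $i$ from the conditional density proportional to $\prod_{j<i,\;v_jv_i\in E(H)}W(\phi(v_j),\cdot)$, with the fallback $W\equiv p$ when $v_i$ has no earlier neighbour. If $\mu$ denotes the law of $\phi$, then $\mu$ is supported on homomorphisms, and convexity of $x\log x$ (Jensen) gives $t_H(W)\ge\exp\!\big(\mathbb{H}(\mu)-\mathbb{H}(\mathrm{Unif}[0,1]^n)\big)$, so it remains to prove $\mathbb{H}(\mu)\ge e(H)\log p$ in the normalisation where the uniform measure has entropy $0$. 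Subadditivity of entropy splits $\mathbb{H}(\mu)$ over the revealing steps, and the contribution of step $i$ is controlled, via Jensen applied inside $\log$, by expectations of $\log\!\int W(\phi(v_j),\cdot)$-type integrals, which one wants to pin at exactly $\log p$ for each edge of $H$. That last bookkeeping is the crux: it goes through when the conditional laws at each step \emph{factor} across the previously revealed neighbours (the "independent conditioning" phenomenon), which is guaranteed whenever $H$ admits a decomposition into Sidorenko building blocks glued along complete bipartite or tree-like interfaces, but it can fail otherwise.

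The remaining steps are therefore: (i) produce, for an \emph{arbitrary} bipartite $H$, a vertex ordering -- equivalently an elimination/tree-decomposition of $H$ into Sidorenko pieces -- for which the stepwise entropy deficiencies sum to exactly $e(H)\log(1/p)$; (ii) alternatively, exhibit a sequence of Cauchy--Schwarz "reflections" folding $H$ onto a tree $T$, so that $t_H(W)\ge t_T(W)^{e(H)/e(T)}\ge p^{e(H)}$ since trees are Sidorenko; (iii) verify that the deficiency sum (or the reflection chain) matches $e(H)$ tightly, not merely up to a constant, and then exponentiate. Any bipartite $H$ currently known to satisfy the conjecture does so because it admits one of these two structures, and the substitution results proved in this paper fit the same pattern, supplying new instances of (i)--(ii).

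The main obstacle -- and the reason the conjecture is still open -- is precisely that \textbf{no such structure is known for all bipartite $H$}. There are small ($3$-regular) bipartite graphs for which neither the entropy-deficiency accounting closes at the right value nor any reflection sequence to a tree is available, and dealing with them appears to demand either a genuinely new decomposition theorem for bipartite graphs (strong enough to always feed the entropy machine) or a global convex-analytic/positivity argument on graphon space that avoids decomposing $H$ altogether. In other words, the plan reduces Sidorenko's conjecture to a clean structural question about bipartite graphs, and that structural question is where the hard part lies.
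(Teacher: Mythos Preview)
The statement you are attempting to prove is \Cref{conj:Sidorenko}, which is a \emph{conjecture}, not a theorem: the paper does not prove it, and indeed states explicitly that ``Sidorenko's conjecture is still far from being completely understood.'' There is therefore no ``paper's own proof'' to compare your proposal against; the paper's contribution is to establish the conjecture for certain new families of bipartite graphs (Theorems~\ref{thm:main_theta_replacement}--\ref{thm:main_semidirect_product}), not for all bipartite $H$.

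Your proposal is accordingly not a proof but a survey of the entropy and Cauchy--Schwarz machinery, and you acknowledge this yourself: steps (i) and (ii) ask for an ordering or reflection chain that exists for \emph{every} bipartite $H$, and you then write that ``no such structure is known for all bipartite $H$'' and that certain $3$-regular bipartite graphs defeat both approaches. That is precisely the gap. A strategy that reduces the conjecture to another open structural question of at least comparable difficulty does not constitute a proof, and nothing in the proposal bridges this gap. If your intention was instead to prove the special cases treated in the paper, note that those arguments proceed quite differently --- via locally dense auxiliary kernels $W^{\Theta}$ and Reiher's lemma (\Cref{lem:Reihers_lemma}, \Cref{lem:auxiliary_locally_dense}) together with the KNRS hypothesis on $H$ --- rather than through entropy or reflection sequences.
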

We say a graph $H$ is {\it Sidorenko} if (\ref{eqt:sidorenko}) holds for every graph $G$.
Sidorenko~\cite{Sidorenko1991} confirmed the conjecture for trees, even cycles, complete bipartite graphs, and when one of the parts of the bipartition has size at most $4$.
Despite attracting continuous interests and efforts, and many partial results over recent years (see~\cite{Conlon_approximate_sidorenko_2010,Conlon_Sidorenko_subdivision_2018, Conlon_sidorenko_blowup_2021, Coregliano_induced_sidorenko_2024, Hatami_graph_norms_2010, Kim_sidorenko_product_2016, Li_logarimic_calculus_2011, Szegedy_informationtheoretic_sidorenko_2015}), Sidorenko's conjecture is still far from being completely understood.

Note that the bipartiteness condition in Sidorenko's conjecture is necessary since $G$ is possibly a bipartite graph.
One may ask which condition of $G$ ensures that the (asymptotically) same inequality holds for a nonbipartite graph $H$.
Chung, Graham, and Wilson~\cite{Chung_quasirandom_1989} proved a celebrity result on the characterization of quasirandomness which implies that if for every $U \subseteq V(G)$, the number of edges inside $U$ is $d{|U| \choose 2} + o(v(G)^2)$, then $t_H(G) = d^{e(H)}+o(1)$. 
Kohayakawa, Nagle, R\"{o}dl, and Schacht~\cite{Kohayakawa_KNRS_2010} conjectured that one-side inequality is sufficient to guarantee the lower bound, which is now called KNRS conjecture.
A graph $G$ on $n$-vertex is called $(\varepsilon, d)$-dense if for every $U \subseteq V(G)$ with $|U| \geq \varepsilon n$, there holds $e(G[U]) \geq d {|U| \choose 2}$. Informally, such a graph $G$ is called {\it locally dense}.
\begin{conj}[KNRS conjecture]
    Let $H$ be a graph. Then for every $d, \eta \in (0, 1)$, there exists $\varepsilon = \varepsilon(d, \eta, H)$ such that if $G$ is $(\varepsilon, d)$-dense, then $t_H(G) \geq (1-\eta)d^{e(H)}$.
\end{conj}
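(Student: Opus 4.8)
The plan is to pass to graphons and then split according to the parity of $H$. By a routine compactness argument, the conjecture is equivalent to the following limit statement: every graphon $W$ satisfying $\int_{S\times S}W\ge d\,\mu(S)^2$ for every measurable $S\subseteq[0,1]$ --- call such $W$ \emph{locally $d$-dense} --- obeys $t_H(W)\ge d^{e(H)}$. The parameter $\varepsilon$ and the slack $\eta$ are exactly what is needed to move between this and the finite formulation, and to absorb the harmless discrepancy between $t_{K_2}$ and the edge density.

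For bipartite $H$, the statement is implied by Sidorenko's conjecture for $H$, and hence is a theorem whenever $H$ is known to be Sidorenko: taking $S=[0,1]$ in the hypothesis already gives $\int W\ge d$, so if $H$ is Sidorenko then $t_H(W)\ge\bigl(\int W\bigr)^{e(H)}\ge d^{e(H)}$. In particular the bipartite families resolved unconditionally in this paper --- theta substitutions of subdivisions, and of complete graphs --- satisfy the KNRS conjecture. All the genuinely new content therefore lies in the non-bipartite case, where the crude bound $\int W\ge d$ is of no help: a bipartite graphon can have $\int W=d$ yet $t_H(W)=0$ for every non-bipartite $H$, so one must use the hypothesis at every scale $S$, not only at $S=[0,1]$.

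For non-bipartite $H$ I would follow the route that settles the odd-cycle case (Reiher): decompose $W$ by its super-level sets, $W=\int_0^1\chi_t\,dt$ with $\chi_t=\mathbf 1[W\ge t]$ a $0/1$-graphon, transfer the local-density hypothesis on $W$ to a weighted local-density statement for the family $\{\chi_t\}_t$, write $t_H(W)$ as an integral over assignments of a threshold to each edge of $H$, and bound each resulting term by exploiting that a locally dense $0/1$-graphon is ``robustly connected'': it must contain a dense blow-up-like pattern and therefore forces many homomorphic copies of $H$. The main obstacle, and the reason the conjecture remains open, is that for an arbitrary non-bipartite $H$ there is no known single parameter playing the role that odd girth together with a spectral estimate plays for odd cycles --- one that simultaneously lower-bounds $t_H$ and is guaranteed to be large whenever $G$ is locally dense. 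Pushing the super-level-set argument through for an $H$ beyond cycles, and graphs that reduce to them, is precisely the crux.
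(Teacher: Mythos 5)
There is no proof to compare against: the statement you are attempting is the KNRS conjecture itself, which the paper states as an open conjecture (``the full conjecture is still widely open'') and never proves --- it only uses ``$H$ is KNRS'' as a \emph{hypothesis} in \Cref{thm:main_theta_replacement} and \Cref{thm:main_semidirect_product}, and proves it for the special class of flowers in \Cref{thm:flower}. Your proposal, read carefully, is also not a proof. The preliminary reductions you make are fine: the graphon reformulation is exactly \Cref{lem:Reihers_lemma}'s setting (Lemma 2.6 of Brada\v{c}--Sudakov--Wigderson, quoted in the paper), and taking $S=[0,1]$ does show that a $d$-locally dense graphon has $t_{K_2}(W)\ge d$, so that any \emph{Sidorenko} bipartite $H$ is KNRS --- this is the paper's remark ``It is clear that if $H$ is Sidorenko, then it is KNRS.'' But this settles the bipartite case only conditionally on Sidorenko's conjecture, which is itself open, so even that half of your argument is not unconditional.

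For the non-bipartite case you offer only a programme --- super-level-set decomposition in the style of Reiher's odd-cycle argument --- and you explicitly concede in your final sentence that the key step (``pushing the super-level-set argument through for an $H$ beyond cycles'') is ``precisely the crux'' and is missing. A proof attempt that ends by identifying the unproved step as the crux is a statement of the problem, not a solution. Concretely, the gap is: you have no mechanism that converts local density at all scales into a lower bound on $t_H(W)$ for a general non-bipartite $H$; the known mechanisms (convexity/Cauchy--Schwarz for complete multipartite graphs, the spectral argument for odd cycles, the product construction of Brada\v{c}--Sudakov--Wigderson, and the flower argument in \Cref{thm:flower} of this paper) each exploit special structure of $H$. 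If your goal was to prove KNRS for some new class of graphs, you would need to specify the class and supply that mechanism; as written, nothing beyond the (conditional) bipartite reduction is established.
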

We say $H$ is {\it KNRS} if $H$ satisfies the KNRS conjecture. While some partial cases such as complete graphs, complete multipartite graphs~\cite{Kohayakawa_KNRS_2010}, odd cycles~\cite{Reiher_KNRS_oddcycle_2014}, see also~\cite{bradac_counting_2024, kral2024common, Lee_KNRS_2021} for recent developments, the full conjecture is still widely open.

It is clear that if $H$ is Sidorenko, then it is KNRS. Conlon, Kim, Lee, and Lee~\cite{Conlon_Sidorenko_subdivision_2018} showed more connections between these two conjectures: if $H$ is KNRS, then its  $1$-subdivision is Sidorenko.
Furthermore, they proved that if one replaces each edge of a KNRS graph $H$ by $K_{2, t}$, then the resulting graph is Sidorenko. A graph $H$ is called a {\it generalized theta graph} if it is obtained by adding internally disjoint paths between two distinct vertices $u$ and $v$. We call $u$ and $v$ the {\it roots} of $H$.  If all these paths have even number of edges, then we call $H$ an {\it even generalized theta graph}. Note that $K_{2,t}$ is the simplest even generalized theta graph. Conlon, Kim, Lee and Lee~\cite{Conlon_Sidorenko_subdivision_2018} also proved that an even generalized theta graph is Sidorenko. 

Our first result is a common generalization of these two results, providing a new class of Sidorenko graphs. 

\begin{theorem}\label{thm:main_theta_replacement}
    Let $H$ be a graph that satisfies the KNRS conjecture. Let $\Theta$ be an even generalized theta graph with roots $u$ and $v$. 
    If we use $\Theta$ to replace each edge of $H$ by identifying its two vertices with $u$ and $v$ respectively, then the resulting graph is Sidorenko.
    In particular, every even generalized theta graph is Sidorenko.
\end{theorem}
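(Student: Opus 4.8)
The plan is to reinterpret the homomorphism density of the theta‑substituted graph as that of the \emph{$1$-subdivision} of $H$ against an auxiliary bipartite weighting, and then to invoke the two facts recalled above: that the $1$-subdivision of a KNRS graph is Sidorenko, and that trees are Sidorenko. Fix a graphon $W$ with $d:=t_{K_2}(W)=\int W$, let $\Theta$ have internally disjoint $u$--$v$ paths of even lengths $2\ell_1,\dots,2\ell_m$ (so $\ell_i\ge 1$), put $L:=e(\Theta)=2\sum_i\ell_i$, and write $H_\Theta$ for the graph obtained from $H$ by replacing each edge with $\Theta$; then $e(H_\Theta)=L\cdot e(H)$ and the goal is $t_{H_\Theta}(W)\ge d^{L\,e(H)}$. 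For $k\ge1$ let $P_k[W](x,y)$ denote the homomorphism-density kernel of the path with $k$ edges and prescribed endpoints $x,y$, and set $g_i:=P_{\ell_i}[W]$, a symmetric $[0,1]$-valued kernel. Splitting a path of length $2\ell_i$ at its midpoint gives $P_{2\ell_i}[W](x,y)=\int g_i(x,z)\,g_i(z,y)\,dz$, so with $\phi(x,\mathbf z):=\prod_{i=1}^m g_i(x,z_i)$ for $\mathbf z=(z_1,\dots,z_m)\in[0,1]^m$ we obtain
\[
\prod_{i=1}^m P_{2\ell_i}[W](x,y)=\int_{[0,1]^m}\phi(x,\mathbf z)\,\phi(y,\mathbf z)\,d\mathbf z .
\]
Since $t_{H_\Theta}(W)$ is the integral of $\prod_{uv\in E(H)}\prod_{i=1}^m P_{2\ell_i}[W](x_u,x_v)$ over $(x_w)_{w\in V(H)}$, substituting a midpoint vector $\mathbf z^e\in[0,1]^m$ for each edge $e$ by the displayed identity reorganizes the integral into $t_{S(H)}(\phi)$: here $S(H)$ is the $1$-subdivision of $H$, and $\phi$ is viewed as a bipartite graphon between $[0,1]$ (hosting $V(H)$) and $[0,1]^m$ (hosting the subdivision vertices), the two edges at a subdivision vertex $\widehat{uv}$ of $S(H)$ contributing $\phi(x_u,\mathbf z^{uv})$ and $\phi(x_v,\mathbf z^{uv})$.

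Since $H$ satisfies the KNRS conjecture, $S(H)$ is Sidorenko by the Conlon--Kim--Lee--Lee theorem; applying this to the $[0,1]$-valued bipartite graphon $\phi$ (mapping the two bipartition classes of $S(H)$ to the two sides) and using $e(S(H))=2e(H)$ gives $t_{S(H)}(\phi)\ge\big(\int\phi\big)^{2e(H)}$. It remains to bound $\int\phi=\int_{[0,1]^{1+m}}\prod_{i=1}^m g_i(x,z_i)\,dx\,d\mathbf z$ from below, and this integral is exactly $t_{\mathrm{Sp}}(W)$ for the \emph{spider} tree $\mathrm{Sp}$ with $m$ legs of lengths $\ell_1,\dots,\ell_m$, a tree with $\sum_i\ell_i=L/2$ edges; since trees are Sidorenko, $\int\phi\ge d^{L/2}$. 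Chaining the inequalities yields $t_{H_\Theta}(W)=t_{S(H)}(\phi)\ge\big(d^{L/2}\big)^{2e(H)}=d^{L\,e(H)}$, as required. Taking $H=K_2$ (trivially KNRS) specializes the argument to $t_\Theta(W)=t_{P_2}(\phi)\ge\big(\int\phi\big)^2\ge d^{L}=d^{e(\Theta)}$, which uses only that paths and spiders (i.e.\ trees) are Sidorenko, so the ``in particular'' follows with no circularity.

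I do not anticipate a real technical obstacle: once the identity $t_{H_\Theta}(W)=t_{S(H)}(\phi)$ is established, the proof merely assembles the quoted subdivision theorem with the classical tree case. The care needed is in verifying that identity and in bookkeeping --- checking that $\phi$ is a genuine $[0,1]$-valued bipartite graphon (this uses $\ell_i\ge 1$, i.e.\ that each path of $\Theta$ has positive even length, so each $g_i$ is an honest kernel), matching the bipartition classes of $S(H)$ with the two coordinate factors of $\phi$, and invoking the standard reduction of Sidorenko's inequality for a bipartite graph to the corresponding inequality for $[0,1]$-valued bipartite graphons. One may also reduce to connected $H$ at the outset, so that $S(H)$ and $\mathrm{Sp}$ are connected, though this is not essential.
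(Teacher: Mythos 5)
Your central construction is correct, and it is in fact a direct repackaging of the paper's approach: writing $P_{2\ell_i}[W](x,y)=\int g_i(x,z)g_i(z,y)\,dz$ and setting $\phi(x,\mathbf z)=\prod_i g_i(x,z_i)$, your $t_{S(H)}(\phi)$ equals $t_H(\Phi)$ where $\Phi(x,y)=\int\phi(x,\mathbf z)\phi(y,\mathbf z)\,d\mathbf z=\prod_i W^{2\ell_i}(x,y)$, which is precisely the paper's $\Theta$-counting kernel $W^{\Theta}$ from \Cref{lem:relation_of_aux_and_original}. The gap is the inequality $t_{S(H)}(\phi)\ge(\int\phi)^{2e(H)}$. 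Here $\phi$ is a genuinely asymmetric kernel on $[0,1]\times[0,1]^m$, and you are invoking the Conlon--Kim--Lee--Lee subdivision theorem in an asymmetric, bipartite-kernel form with a prescribed assignment of the two sides. The quoted theorem is the symmetric statement, and the passage from the symmetric to the asymmetric form of Sidorenko's inequality for a fixed graph is not the formality you suggest: the obvious symmetrization (two copies of $[0,1]$ carrying $\phi$ and $\phi^{T}$ on the off-diagonal blocks, combined with a tensor-power trick) only controls $\max\{t_{S(H),(A,B)}(\phi),\,t_{S(H),(B,A)}(\phi)\}$ or their product, not the particular orientation you need. Nor can you instead apply the KNRS property of $H$ to $\Phi$ directly: for a general (non-regular) $W$ the marginals $\int\phi(x,\mathbf z)\,d\mathbf z$ vary with $x$, and then $\Phi=\phi\circ\phi^{T}$, though positive semidefinite, need not be $(\int\phi)^2$-locally dense (take $S$ to be a set of low-degree points), so the locally-dense machinery is unavailable.

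The repair is short and turns your argument into a streamlined version of the paper's proof. First reduce to a $d$-regular $W$ via the quoted Coregliano lemma. Then $\int_{[0,1]^m}\phi(x,\mathbf z)\,d\mathbf z=\prod_i d^{\ell_i}=d^{L/2}$ for a.e.\ $x$, and for any measurable $S\subseteq[0,1]$, Jensen's inequality gives
\begin{equation*}
\int_{S\times S}\Phi(x,y)\,dx\,dy=\int_{[0,1]^m}\Bigl(\int_S\phi(x,\mathbf z)\,dx\Bigr)^{2}d\mathbf z\;\ge\;\Bigl(\int_S\int_{[0,1]^m}\phi(x,\mathbf z)\,d\mathbf z\,dx\Bigr)^{2}=d^{L}\lambda(S)^2,
\end{equation*}
i.e.\ $W^{\Theta}$ is $d^{e(\Theta)}$-locally dense. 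This is exactly \Cref{lem:auxiliary_locally_dense}, which the paper proves instead by peeling off one path at a time with \Cref{lem:Reihers_lemma}; your simultaneous midpoint splitting gives it in one line and is, to that extent, a genuine simplification. From there $t_{H'}(W)=t_H(W^{\Theta})\ge d^{e(\Theta)e(H)}$ follows from the graphon form of the KNRS property of $H$, with no need for the subdivision theorem or the spider bound. As written, however, your proof substitutes for the one substantive lemma an appeal to an asymmetric strengthening of a symmetric black box, and that strengthening is essentially the content that has to be proved.
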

Very recently, Chen, Lin, and Ma~\cite{chen_kohayakawa-nagle-rodl-schacht_2024} independently proved that if $H$ is KNRS, then its $(2k-1)$-subdivision is Sidorenko. 
While some ideas of our proof and Chen, Lin, and Ma's are close: defining auxiliary weighted graphs that count the number of paths between two vertices, and showing that if $G$ is almost regular, then these weighted graphs are locally dense. 
However, to extend paths to generalized theta graphs, we need to show that the Hadamard product of those auxiliary graphs is also locally dense. As the locally dense property is not preserved by the Hadamard product, we need to prove that those auxiliary graphs are correlated.

\medskip

The ``subdivision results'' and the ``$\Theta$ results'' aforementioned are processing a uniform replacement of edges of $H$, and more importantly they all require $H$ to be KNRS. Our next result goes beyond in both aspects, showing that for \emph{any graph $H$}, the graph obtained from  replacing edges of $H$ by \emph{any non-uniform} even generalized theta graphs is Sidorenko provided that the number of paths satisfies a certain divisibility condition.

\begin{theorem}\label{thm:main_non_uniform}
    Let $H$ be an $h$-vertex graph and $H'$ be a graph obtained by replacing each edge $e$ of $H$ with internally vertex-disjoint paths of even lengths connecting the end vertices of $e$. Let $h_e(k)$ denote the number of length $k$ paths in the replacement of $e$. 
    If $\sum_{e \in E(H)} h_e(2k)$ is divisible by ${h \choose 2}$ for all $k \geq 1$, then $H'$ satisfies Sidorenko's conjecture.
    Also, if $\sum_{e \in E(H)} h_e(2k)=0$ for all but one $k$ and $\sum_{e \in E(H)} h_e(2k) \geq {h \choose 2}$ for such exceptional $k$, then $H'$ is Sidorenko.
\end{theorem}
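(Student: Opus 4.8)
The plan is to pass to the graphon setting, express $t_{H'}$ as an integral of a product of iterated kernels, use a weighted AM--GM (convexity) argument to \emph{symmetrise} the way the paths are distributed among the $\binom{h}{2}$ pairs of root vertices, and then reduce to Theorem~\ref{thm:main_theta_replacement} applied with the host graph $K_h$, which is KNRS. Concretely, fix a graphon $W$ and write $d=\int_{[0,1]^2}W=t_{K_2}(W)$; it suffices to prove $t_{H'}(W)\ge d^{e(H')}$. For $\ell\ge1$ let $W^{(2\ell)}$ be the symmetric kernel $W^{(2\ell)}(x,y)=\int W(x,z_1)W(z_1,z_2)\cdots W(z_{2\ell-1},y)\,dz_1\cdots dz_{2\ell-1}$, so $0\le W^{(2\ell)}\le 1$. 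Since the paths used to build $H'$ are internally vertex-disjoint, integrating out the internal vertices of a length-$2\ell$ path joining the ends of an edge $e=uv$ of $H$ contributes exactly the factor $W^{(2\ell)}(x_u,x_v)$; identifying $V(H)$ with $[h]$ and writing $h_{pq}(2\ell)$ for the number of length-$2\ell$ paths placed on the pair $\{p,q\}$ (with $h_{pq}(2\ell)=0$ when $pq\notin E(H)$), this gives
\[
 t_{H'}(W)=\int_{[0,1]^{h}}\ \prod_{1\le p<q\le h}\ \prod_{\ell\ge1}W^{(2\ell)}(x_p,x_q)^{\,h_{pq}(2\ell)}\;dx_1\cdots dx_h .
\]

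For a system $a=(a^{(\ell)}_{pq})_{1\le p<q\le h,\ \ell\ge1}$ of non-negative reals let $T(a):=\int_{[0,1]^h}\prod_{p<q}\prod_{\ell}W^{(2\ell)}(x_p,x_q)^{a^{(\ell)}_{pq}}$, so the displayed identity reads $t_{H'}(W)=T(a^\ast)$, where $a^\ast$ is supported on $E(H)$ with $a^{\ast(\ell)}_{pq}=h_{pq}(2\ell)$. Relabelling the integration variables shows $T(\sigma\cdot a)=T(a)$ for every permutation $\sigma$ of $[h]$, where $\sigma\cdot a$ denotes $a$ with its pair-indices permuted by $\sigma$. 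By weighted AM--GM, $\prod_i c_i^{\lambda a_i+(1-\lambda)b_i}=(\prod_i c_i^{a_i})^{\lambda}(\prod_i c_i^{b_i})^{1-\lambda}\le\lambda\prod_i c_i^{a_i}+(1-\lambda)\prod_i c_i^{b_i}$ for non-negative reals, and integrating this pointwise inequality over $[0,1]^h$ shows that $a\mapsto T(a)$ is convex. Therefore, writing $\bar a:=\frac1{h!}\sum_{\sigma}\sigma\cdot a^\ast$, we get $t_{H'}(W)=T(a^\ast)=\frac1{h!}\sum_\sigma T(\sigma\cdot a^\ast)\ge T(\bar a)$, and a short count gives $\bar a^{(\ell)}_{pq}=\binom{h}{2}^{-1}\sum_{e\in E(H)}h_e(2\ell)=:N_\ell/\binom{h}{2}$ for every pair $\{p,q\}$. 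Thus $T(\bar a)$ is the homomorphism density into $W$ of the graph $K_h$ with every edge carrying the same weight system $\bigl(N_\ell/\binom{h}{2}\bigr)_\ell$.

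Now suppose $\binom{h}{2}\mid N_\ell$ for all $\ell$. Then every $N_\ell/\binom{h}{2}$ is a non-negative integer, so $T(\bar a)=t_F(W)$, where $F$ arises from $K_h$ by replacing each edge with the even generalized theta graph $\Theta$ consisting of $N_\ell/\binom{h}{2}$ paths of length $2\ell$ for each $\ell$. Since $K_h$ is KNRS, Theorem~\ref{thm:main_theta_replacement} yields $t_F(W)\ge d^{e(F)}$, and $e(F)=\binom{h}{2}\sum_\ell 2\ell\cdot\frac{N_\ell}{\binom{h}{2}}=\sum_\ell 2\ell N_\ell=\sum_{e\in E(H)}\sum_\ell 2\ell\,h_e(2\ell)=e(H')$; chaining the two displays proves the first assertion. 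For the second assertion all paths share a single even length $2k$, so with $N:=\sum_e h_e(2k)\ge\binom{h}{2}$ and $\bar h:=N/\binom{h}{2}\ge1$ one has $T(\bar a)=\int_{[0,1]^h}\prod_{p<q}W^{(2k)}(x_p,x_q)^{\bar h}$. Put $g(s):=\log\int_{[0,1]^h}\prod_{p<q}W^{(2k)}(x_p,x_q)^{s}$; by H\"older's inequality $g$ is convex on $[0,\infty)$, with $g(0)=0$, and for $s=1$ the integral equals the homomorphism density of the $(2k-1)$-subdivision of $K_h$ in $W$ (a single path being the simplest even generalized theta graph), which is at least $d^{2k\binom{h}{2}}$ by Theorem~\ref{thm:main_theta_replacement} since $K_h$ is KNRS; hence $g(1)\ge 2k\binom{h}{2}\log d$. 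A convex function vanishing at $0$ has $s\mapsto g(s)/s$ non-decreasing on $(0,\infty)$, so $g(\bar h)\ge\bar h\,g(1)\ge 2k\bar h\binom{h}{2}\log d=2kN\log d$, i.e.\ $T(\bar a)\ge d^{2kN}=d^{e(H')}$; together with $t_{H'}(W)\ge T(\bar a)$ this completes the proof.

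The only delicate point is that the kernels $W^{(2\ell)}$ may vanish on sets of positive measure, so the logarithms above can be $-\infty$; this is harmless because the weighted AM--GM and H\"older steps are valid for all non-negative reals, but to invoke convexity of cumulant generating functions in the cleanest way one should first replace $W$ by $(1-\varepsilon)W+\varepsilon$ and let $\varepsilon\to0$ at the end. Apart from this routine bookkeeping the argument is a short reduction to Theorem~\ref{thm:main_theta_replacement}, where all the substantive work lies; the divisibility hypothesis is used precisely to make the symmetrised weight system $\bar a$ integral, hence realisable as a genuine theta substitution into the complete graph $K_h$, and in the single-length case the extra hypothesis $\sum_e h_e(2k)\ge\binom{h}{2}$ (together with the separate convexity of $g$) is what compensates for the possible non-integrality of $\bar h$.
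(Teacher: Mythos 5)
Your proposal is correct and follows essentially the same route as the paper: symmetrising the exponent system over all permutations of $[h]$ (the paper does this via the product form of H\"older's inequality, you via the equivalent integrated AM--GM/convexity of $a\mapsto T(a)$), then in the divisible case invoking \Cref{thm:main_theta_replacement} for $K_h$ with the resulting integral theta substitution, and in the single-length case bounding $T(\bar a)\ge t_{K_h}(W^{2k})^{\bar h}$ (the paper uses Jensen with $x\mapsto x^{\alpha_{2k}}$, you the equivalent log-convexity of $g(s)$). The edge counts and the final reduction to the Sidorenko property of the $(2k-1)$-subdivision of $K_h$ match the paper's argument exactly.
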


The proof of~\Cref{thm:main_non_uniform} builds on~\Cref{thm:main_theta_replacement} and uses a H\"{o}lder trick inspired by Conlon and Lee~\cite{Conlon_sidorenko_blowup_2021}. Conlon and Lee~\cite{Conlon_sidorenko_blowup_2021} proved that for every bipartite graph $H$, there exists a bipartite graph $H'$ such that their disjoint union $H \cup H'$ is Sidorenko. Using \Cref{thm:main_non_uniform}, we have an analogous statement that if $H$ is a subdivision of a graph, then $H'$ can be taken as a generalized theta graph.
\begin{cor}
    Let $H_0$ be a graph and $H$ be a subdivision of $H_0$ such that every edge is subdivided odd number of times.
    Then there exists a generalized theta graph $\Theta$ such that $H \cup \Theta$ is Sidorenko.
\end{cor}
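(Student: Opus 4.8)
The plan is to exhibit $H\cup\Theta$ as a single theta-substitution of an auxiliary graph to which \Cref{thm:main_non_uniform} applies. First I would unpack the hypothesis on $H$: subdividing an edge an odd number of times replaces it by a path of \emph{even} length, so $H$ is obtained from $H_0$ by replacing each edge $e$ with a single path of some even length $2k_e$. Thus $H$ is already of the form treated in \Cref{thm:main_non_uniform}, with multiplicities $h_e(2k_e)=1$ (and $h_e(j)=0$ otherwise); what fails the divisibility hypothesis there is merely the absence of extra even paths, and the idea is to park exactly those extra paths inside a disjoint generalized theta graph $\Theta$.

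Concretely, set $h:=v(H_0)+2$ and, for each $k\ge1$, let $a_k$ denote the number of edges of $H_0$ that are subdivided into a path of length $2k$; only finitely many $a_k$ are nonzero. I would then choose nonnegative integers $(t_k)_{k\ge1}$ with finite support, not all zero, such that $\binom h2$ divides $a_k+t_k$ for every $k$: for each $k$ with $a_k>0$ let $t_k$ be the least nonnegative integer with $t_k\equiv-a_k\pmod{\binom h2}$, put $t_k=0$ for all other $k$, and finally increase $t_1$ by $\binom h2$ so that $\sum_k t_k\ge1$ (one may increase it further to give $\Theta$ as many paths as desired). Let $\Theta$ be the even generalized theta graph with roots $u,v$ consisting of exactly $t_k$ internally vertex-disjoint paths of length $2k$ joining $u$ and $v$, for all $k\ge1$; in particular $\Theta$ is a generalized theta graph.

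Now let $F:=H_0\cup K_2$ be the disjoint union of $H_0$ with a new edge, so that $v(F)=h$. Replace each edge $e$ of $H_0$ by the single even path of length $2k_e$ used to build $H$, and replace the edge of the extra $K_2$ by the family of $t_k$ paths of length $2k$ (over all $k$) used to build $\Theta$; since the $H_0$-part reconstructs $H$, the $K_2$-part becomes $\Theta$, and these lie in different components, the graph produced is precisely the disjoint union $H\cup\Theta$. Every replacement uses internally vertex-disjoint paths of even length, and for each $k\ge1$ we have $\sum_{e\in E(F)}h_e(2k)=a_k+t_k\equiv0\pmod{\binom h2}$. Hence \Cref{thm:main_non_uniform} applies to $F$ and yields that $H\cup\Theta=F'$ satisfies Sidorenko's conjecture.

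No real obstacle arises; the argument is bookkeeping layered on \Cref{thm:main_non_uniform}. The two points demanding a little care are getting the parity right — an odd number of subdivisions produces an even path, which is exactly what makes $H$ compatible with the even-theta framework — and verifying that the divisibility condition can be met for all $k$ simultaneously, which works because $E(H_0)$ is finite and so only finitely many residues need correcting. In the uniform special case, where every edge of $H_0$ is subdivided the same number of times, one could instead invoke the second clause of \Cref{thm:main_non_uniform}, but the divisibility clause disposes of arbitrary non-uniform subdivisions directly.
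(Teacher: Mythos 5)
Your argument is correct and is precisely the intended derivation of this corollary from \Cref{thm:main_non_uniform}, whose proof the paper leaves implicit: apply the divisibility clause to $H_0$ together with one extra disjoint edge whose replacement by suitably many even paths absorbs the residues modulo $\binom{h}{2}$. The parity observation (odd subdivision gives even paths) and the care taken to keep $\Theta$ nonempty are both handled correctly.
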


We remark that a result of a similar spirit for KNRS conjecture was recently proved by Kr{\'a}\v{l}, Volec and Wei \cite{kral2024common}, who showed that for every graph $H$ with girth at least $50$, there exists a larger graph $H’$, which is KNRS and containing $H$ as an induced subgraph.

While \Cref{thm:main_non_uniform} gives a new approach to consider non-uniform replacement, we are not able to deal with subdivisions with different path lengths. 
In the consideration of non-uniform subdivisions, we prove a special case of clique subdivision is Sidorenko. 
\begin{theorem}\label{thm:non_uniform_of_clique}
    Let $h, \ell_1, \ell_2 \geq 1$ be integers and $v \in V(K_h)$ be a vertex.
    Let $H$ be a subdivision of $K_h$ obtained by subdividing each edge $2\ell_1-1$ times if an edge is not incident to $v$ and $\ell_2-1$ times if an edge is incident to $v$.
    Then $H$ is Sidorenko.
\end{theorem}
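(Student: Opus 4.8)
The plan is to reduce to regular host graphs and then to peel the edges at $v$ off from the rest of $K_h$ by a Jensen/convexity argument whose decisive input is that the paths \emph{not} passing through $v$ have even length. As in the proof of \Cref{thm:main_theta_replacement} (and the standard reduction, cf.\ \cite{Conlon_Sidorenko_subdivision_2018}), it suffices to treat the case where $G$ is a $d$-regular graph on $n$ vertices, so $\rho:=t_{K_2}(G)=d/n$; let $A$ be its adjacency matrix. Label $V(K_h)=\{v,w_1,\dots,w_{h-1}\}$ and set $B_{xy}=n^{-(2\ell_1-1)}(A^{2\ell_1})_{xy}$ and $C_{xy}=n^{-(\ell_2-1)}(A^{\ell_2})_{xy}$. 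Counting homomorphisms $H\to G$ by first choosing the image $y$ of $v$ and the images $x_1,\dots,x_{h-1}$ of $w_1,\dots,w_{h-1}$ and then filling in the subdivided paths yields
\[
  t_H(G)=\frac{1}{n^{h}}\sum_{y,\,x_1,\dots,x_{h-1}}\Big(\prod_{1\le i<j\le h-1}B_{x_ix_j}\Big)\Big(\prod_{i=1}^{h-1}C_{yx_i}\Big).
\]
Since $G$ is $d$-regular, $\sum_x C_{yx}=d^{\ell_2}/n^{\ell_2-1}=n\rho^{\ell_2}$ for every $y$, so $\mu_y(x):=C_{yx}/(n\rho^{\ell_2})$ is a probability distribution on $V(G)$. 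Writing $\mathbb{E}_y$ for the average over $y\in V(G)$ and substituting $C_{yx_i}=n\rho^{\ell_2}\mu_y(x_i)$, we get
\[
  t_H(G)=\rho^{\ell_2(h-1)}\cdot \mathbb{E}_y\big[F(\mu_y)\big],
  \qquad
  F(\mu):=\sum_{x_1,\dots,x_{h-1}}\Big(\prod_{i<j}B_{x_ix_j}\Big)\prod_{i=1}^{h-1}\mu(x_i),
\]
so $F(\mu)$ is the weighted $K_{h-1}$-density of $B$ with respect to the measure $\mu$.

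The heart of the argument is the claim that $F$ is convex on the simplex of probability distributions on $V(G)$. To see this, note that the second directional derivative of $F$ at $\mu$ along a direction $\nu$ of total mass $0$ is a nonnegative multiple of $\langle \nu,M\nu\rangle$, where $M$ is the Hadamard product of $B$ with the matrix
\[
  M'_{xx'}=\sum_{z_3,\dots,z_{h-1}}\Big(\prod_{i=3}^{h-1}\mu(z_i)\Big)\Big(\prod_{3\le i<j\le h-1}B_{z_iz_j}\Big)\Big(\prod_{i=3}^{h-1}B_{xz_i}B_{x'z_i}\Big).
\]
For each fixed $(z_3,\dots,z_{h-1})$ the factor $\prod_i B_{xz_i}B_{x'z_i}$ equals $u(x)u(x')$ with $u(x)=\prod_i B_{xz_i}$, i.e.\ a rank-one positive semidefinite matrix; as $B$ and $\mu$ have nonnegative entries, $M'$ is a nonnegative combination of such matrices and hence positive semidefinite. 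Because the edges not incident to $v$ are subdivided to even length $2\ell_1$, we have $B=(A^{\ell_1})^2/n^{2\ell_1-1}\succeq 0$, so $M=B\circ M'\succeq 0$ by the Schur product theorem, and $F$ is convex. (This is the only step using the parity hypothesis; the length $\ell_2$ at $v$ is arbitrary, since $C$ enters only through the distributions $\mu_y$.)

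With convexity in hand, Jensen's inequality gives $\mathbb{E}_y[F(\mu_y)]\ge F(\mathbb{E}_y[\mu_y])$. In a $d$-regular graph the number of length-$\ell_2$ walks ending at any fixed vertex is $d^{\ell_2}$, so $\mathbb{E}_y[C_{yx}]=\rho^{\ell_2}$ for every $x$, hence $\mathbb{E}_y[\mu_y]$ is the uniform distribution and $\mathbb{E}_y[F(\mu_y)]\ge F(\mathrm{Unif})$. Matching normalisations, $F(\mathrm{Unif})=n^{1-h}\sum_{x_1,\dots,x_{h-1}}\prod_{i<j}B_{x_ix_j}=t_S(G)$, where $S$ is the $(2\ell_1-1)$-subdivision of $K_{h-1}$. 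Since a path of length $2\ell_1$ is an even generalized theta graph and $K_{h-1}$ is KNRS, \Cref{thm:main_theta_replacement} shows $S$ is Sidorenko, so $F(\mathrm{Unif})=t_S(G)\ge \rho^{e(S)}=\rho^{2\ell_1\binom{h-1}{2}}$. Combining everything gives $t_H(G)\ge \rho^{\ell_2(h-1)}\rho^{2\ell_1\binom{h-1}{2}}=\rho^{e(H)}$, since $e(H)=\ell_2(h-1)+2\ell_1\binom{h-1}{2}$. I expect the convexity claim to be the main obstacle: it is there that one needs positive semidefiniteness of the path-weighting $B$ (equivalently, the evenness of the subdivisions away from $v$) together with the Schur product theorem, and it is this ingredient, rather than anything about the edges at $v$, that forces the special shape of the non-uniform subdivision in the statement.
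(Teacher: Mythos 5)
Your proof is correct, but it takes a genuinely different route from the paper. The paper obtains this theorem as an immediate corollary of \Cref{thm:main_semidirect_product}, taking $H_1$ to be a path of length $\ell_2$ (with $I$ one endpoint and $a$ the other) and $H_2=K_{h-1}$; the engine of that proof is the weighted Reiher-type lemma for KNRS graphs (\Cref{lem:Reiher_lemma_for_general_graph}, due to Brada\v{c}--Sudakov--Wigderson) applied to the $d^{2\ell_1}$-locally dense kernel $W^{2\ell_1}$, followed by Jensen and the Sidorenko property of the path. You replace that lemma by a direct convexity argument: the weighted $K_{h-1}$-density $F(\mu)$ has Hessian $(h-1)(h-2)\,B\circ M'$ with $M'$ a nonnegative combination of rank-one PSD matrices and $B=(A^{\ell_1})^2/n^{2\ell_1-1}\succeq 0$, so the Schur product theorem gives convexity on the simplex, Jensen pushes the measures $\mu_y$ to uniform, and \Cref{thm:main_theta_replacement} finishes via the $(2\ell_1-1)$-subdivision of $K_{h-1}$; all normalisations check out. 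What each approach buys: yours is more self-contained (no weighted-KNRS machinery beyond \Cref{thm:main_theta_replacement}) and isolates exactly where the parity hypothesis enters (positive semidefiniteness of $B$), but it is tailored to the complete graph --- for a general KNRS graph $H_2$ the cross-derivatives corresponding to non-adjacent branch vertices lack the $B_{xx'}$ factor and the Hessian need not be PSD, so your argument would not yield \Cref{thm:main_semidirect_product} in its full generality, whereas the paper's route through local denseness handles arbitrary KNRS $H_2$ and arbitrary Sidorenko $H_1$.
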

\Cref{thm:non_uniform_of_clique} is in fact a special case of a more general theorem. To state it, we need a notion of product of two graphs introduced by Brada\v{c}, Sudakov and Wigderson~\cite{bradac_counting_2024}.
For graphs $H_1, H_2$, an independent set $I \subseteq V(H_1)$, and a vertex $a \in V(H_1) \setminus I$, let $H_1 \ltimes_I^a H_2$ be a graph obtained by the following process. 
We start with $|V(H_2)|$ copies of $H_1$ that are identified at $I$ and disjoint otherwise. Let $X$ be the collection of copies of the vertex $a$ from each $H_1$. We add a copy of $H_2$ on $X$ to obtain $H_1 \ltimes_I^a H_2$.
Brada\v{c}, Sudakov and Wigderson~\cite{bradac_counting_2024} recently proved that if $H_1$ and $H_2$ are KNRS, then $H_1 \ltimes_I^a H_2$ is also KNRS.
We note that the product $H_1 \ltimes_I^a H_2$ may not be bipartite even if $H_1$ and $H_2$ are Sidorenko (considering the case that $H_1$ is a path and $H_2$ is an edge). So a direct analog of Brada\v{c}, Sudakov and Wigderson's result is not true for Sidorenko's conjecture.
However, our next result shows that if we subdivide the product (minimally) to make it bipartite, then it becomes Sidorenko.

\begin{theorem}\label{thm:main_semidirect_product}
    Let $H_1$ be a Sidorenko graph and $H_2$ be a KNRS graph.
    Let $I \subseteq V(H_1)$ be an independent set and $a \in V(H_1) \setminus I$ be a vertex and $k \geq 1$ be an integer.
    Let $F$ be a graph obtained by subdividing $2k-1$ times edges of $H_1 \ltimes_I^a H_2$ that corresponds to $H_2$.
    Then $F$ is Sidorenko.
\end{theorem}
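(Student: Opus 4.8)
The plan is to read off the structure of $F$ and reduce everything to the Sidorenko property of $H_1$ and the KNRS property of $H_2$. By definition $F$ is obtained from $v(H_2)$ copies of $H_1$, all identified along the independent set $I$, by joining, for each edge $ij$ of $H_2$, the copies $x_i,x_j$ of $a$ by a path of length $2k$. Fixing the common image $\psi\colon I\to V(G)$ of the glued copies of $I$ and the images $z\colon V(H_2)\to V(G)$ of the $x_i$, the remaining homomorphism extensions factor over the copies of $H_1$ (whose private parts are pairwise disjoint) and over the subdivided $H_2$-edges (which are internally disjoint). Passing to the graphon $W$ of $G$, with $d=t_{K_2}(W)$, this gives
\[
t_F(W)=\int\!\!\int\ \prod_{i\in V(H_2)}\beta_\psi(z_i)\ \prod_{ij\in E(H_2)}W^{(2k)}(z_i,z_j)\ d\psi\,dz,
\]
where $\beta_\psi(z)$ is the homomorphism density of $H_1$ in $W$ with $I$ pinned via $\psi$ and $a$ pinned to $z$, and $W^{(2k)}(x,y)=\int W(x,t_1)W(t_1,t_2)\cdots W(t_{2k-1},y)\,dt_1\cdots dt_{2k-1}$ is the ``walk of length $2k$'' graphon, which takes values in $[0,1]$ since $0\le W\le 1$. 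As in the regularisation step in the proof of \Cref{thm:main_theta_replacement}, I would first reduce to the case where $G$ is almost $d$-regular, so that $W^{k}\mathbf 1\approx d^{k}\mathbf 1$. (That $F$ is bipartite is automatic: the $(2k-1)$-subdivision makes every cycle even, and gluing bipartite graphs along an independent set keeps them bipartite.)

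For fixed $\psi$ put $b_\psi=\int\beta_\psi$, the homomorphism density of $H_1$ with only $I$ pinned, and let $\nu_\psi$ be the probability measure with density $\rho_\psi=\beta_\psi/b_\psi$; then the inner integral equals $b_\psi^{\,v(H_2)}\,t_{H_2}(W^{(2k)},\nu_\psi)$, the homomorphism density of $H_2$ in $W^{(2k)}$ taken with respect to $\nu_\psi$. The key step is that for almost regular $G$ the weighted graph $(W^{(2k)},\nu_\psi)$ is $d^{2k}$-locally-dense with respect to $\nu_\psi$, \emph{uniformly in $\psi$}: by Cauchy--Schwarz, for every measurable $S$,
\[
\int_S\!\!\int_S W^{(2k)}\,d\nu_\psi\,d\nu_\psi=\bigl\|W^{k}(\rho_\psi\mathbf 1_S)\bigr\|_2^2\ \ge\ \langle \rho_\psi\mathbf 1_S,\,W^{k}\mathbf 1\rangle^2\ \approx\ d^{2k}\Bigl(\int_S\rho_\psi\Bigr)^2=d^{2k}\,\nu_\psi(S)^2,
\]
and, crucially, the estimate never sees $\psi$ beyond pairing $\rho_\psi\mathbf 1_S$ against the single vector $W^{k}\mathbf 1\approx d^{k}\mathbf 1$. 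Since $H_2$ is KNRS and the KNRS inequality transfers to vertex-weighted graphons (quantise $\nu_\psi$ to a genuine $(\varepsilon,d^{2k})$-dense graph and let $\varepsilon,\eta\to 0$), this yields $t_{H_2}(W^{(2k)},\nu_\psi)\ge (d^{2k})^{e(H_2)}$.

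It remains to combine the estimates. Using $v(H_2)\ge 1$ and Jensen's inequality,
\[
t_F(W)=\int b_\psi^{\,v(H_2)}\,t_{H_2}(W^{(2k)},\nu_\psi)\,d\psi\ \ge\ d^{2k\,e(H_2)}\int b_\psi^{\,v(H_2)}\,d\psi\ \ge\ d^{2k\,e(H_2)}\Bigl(\int b_\psi\,d\psi\Bigr)^{v(H_2)}=d^{2k\,e(H_2)}\,t_{H_1}(W)^{v(H_2)},
\]
and since $H_1$ is Sidorenko, $t_{H_1}(W)\ge d^{e(H_1)}$, whence $t_F(W)\ge d^{\,2k\,e(H_2)+v(H_2)e(H_1)}=t_{K_2}(W)^{e(F)}$ because $e(F)=2k\,e(H_2)+v(H_2)e(H_1)$.

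I expect the main obstacle to be the interaction between the two approximations in the middle step: making the ``almost regular'' reduction precise and checking that the error from replacing $W^{k}\mathbf 1$ by $d^{k}\mathbf 1$ stays negligible uniformly over \emph{all} the measures $\nu_\psi$ at once (this works precisely because in the Cauchy--Schwarz bound that vector appears only once and on its own, so the nonnegative factor $\rho_\psi\mathbf 1_S$ never amplifies the error), and then absorbing the resulting $(1-o(1))$ losses in the density and in the KNRS parameter $\eta$ through a final tensor-power limit. The combinatorial decomposition, the quantisation used for the KNRS transference, and the Jensen step are all routine.
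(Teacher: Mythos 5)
Your proposal is correct and follows essentially the same route as the paper: the same decomposition over the pinned images of $I$ and of the copies of $a$, the same local-density estimate for $W^{2k}$ (your Cauchy--Schwarz computation is exactly the paper's Lemma~\ref{lem:aux_attaching_paths} with $W_1\equiv 1$), the weighted KNRS counting for $H_2$ (the paper cites \Cref{lem:Reiher_lemma_for_general_graph} with the bounded weight $f=\beta_\psi$ directly, so no normalisation to $\nu_\psi$ or quantisation is needed), and the final Jensen-plus-Sidorenko step. The approximation issues you flag as the main obstacle disappear entirely because one may assume $W$ is exactly $d$-regular in the graphon setting (the paper's Lemma 2.2), so $W^k\mathbf 1=d^k\mathbf 1$ holds with no error term.
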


We note that our proof provides a stronger statement that instead of using a subdivision of edges corresponding to $H_2$, one can replace those edges with a theta graph of even lengths. 

\begin{proof}[Proof of \Cref{thm:non_uniform_of_clique}]
    We take $H_1$ to be a path of length $\ell_2$ and $H_2 = K_{h-1}$. The set $I$ consists of one of the end vertices of $H_1$ and $a$ is the other end vertex of $H_1$.
    Then subdividing edges of $H_2$ of $H_1 \ltimes_I^a H_2$ by $2\ell_1-1$ times produces the desired graph.
\end{proof}

Recall that even generalized theta graphs are Sidorenko. It is still not known whether a generalized theta graph is KNRS; Brada\v{c}, Sudakov, and Wigderson~\cite{bradac_counting_2024} proved that it satisfies a slightly weaker ``regular KNRS conjecture''. We prove that if we identify two roots of a generalized theta graph, then it is KNRS. We call such graphs flowers. Equivalently, a \emph{flower} is a graph consisting of a collection of cycles sharing a common vertex and disjoint otherwise.

\begin{theorem}\label{thm:flower}
    Any flower is KNRS.
\end{theorem}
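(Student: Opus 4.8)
The plan is to work directly with the local-density definition. Let $G$ be an $n$-vertex $(\varepsilon,d)$-dense graph and let $F$ be a flower with cycles $C_{2a_1+b_1},\dots$ (or of arbitrary lengths) all sharing the common vertex $w$. Since the cycles are internally disjoint and meet only at $w$, a homomorphism from $F$ to $G$ is determined by choosing the image $x$ of $w$ and then independently choosing, for each petal cycle $C_{\ell_i}$, a closed homomorphic walk of length $\ell_i$ from $x$ to itself. Hence
\begin{equation}\label{eq:flower-factor}
    hom(F,G) \;=\; \sum_{x \in V(G)} \prod_{i} W_{\ell_i}(x,x),
\end{equation}
where $W_\ell(x,x)$ is the number of closed walks of length $\ell$ at $x$, i.e. the $(x,x)$ entry of $A^\ell$ for $A$ the adjacency matrix of $G$. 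So the task reduces to lower-bounding a sum over $x$ of a product of diagonal walk-counts. First I would reduce to the case that $G$ is (nearly) regular: it is standard in this area — and used exactly this way in the subdivision arguments cited above — that locally dense graphs can be assumed almost $d n$-regular up to a negligible error, by deleting low-degree vertices (which form a small set, hence by $(\varepsilon,d)$-density cannot be too numerous) and absorbing the resulting perturbation into the $(1-\eta)$ slack. Assume then every degree is $(1\pm o(1))dn$.

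The heart of the argument is a lower bound on each petal's contribution, of the form $\sum_{x} W_\ell(x,x) \ge (1-o(1)) d^\ell n^2$ when $\ell$ is even, together with a way to handle the product in~\eqref{eq:flower-factor}. The single-petal bound is exactly the statement that an even cycle is KNRS, which is known (Reiher proved odd cycles are KNRS, and even cycles are Sidorenko hence KNRS), and at the spectral level it says $\operatorname{tr}(A^\ell) = \sum_{\lambda} \lambda^\ell \ge (dn)^\ell$, dominated by the top eigenvalue $\lambda_1 \approx dn$ of an almost-regular locally dense graph. For odd-length petals the cycle alone is not bipartite and $\operatorname{tr}(A^\ell)$ can in principle be as small as we like relative to $(dn)^\ell$ if there are large negative eigenvalues — but Reiher's odd-cycle KNRS result says precisely that local density forbids this, giving $\operatorname{tr}(A^{2m+1}) \ge (1-o(1))(dn)^{2m+1}$. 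So each factor individually is controlled. To combine the petals I would use a correlation/Hölder argument in the spirit of the ``Hadamard product'' discussion in the paragraph following Theorem~\ref{thm:main_theta_replacement}: the vector $(W_{\ell_i}(x,x))_{x}$ is, after renormalising, concentrated near its mean because $G$ is almost regular (the number of closed $\ell$-walks at a typical vertex is governed by the quasirandom bulk of the spectrum plus the contribution of the nearly-flat top eigenvector), so these vectors are positively correlated across $i$, and
\begin{equation*}
    \sum_x \prod_i W_{\ell_i}(x,x) \;\ge\; (1-o(1))\, n^{1 - r}\prod_i \Big(\sum_x W_{\ell_i}(x,x)\Big) \;\ge\; (1-o(1))\, d^{\,e(F)} n^{v(F)},
\end{equation*}
where $r$ is the number of petals; the first inequality is Chebyshev's sum inequality / a Hölder step using that each normalised vector is close to constant, and the second plugs in the per-petal bounds and $e(F)=\sum_i \ell_i$, $v(F) = 1 + \sum_i(\ell_i-1) = \sum_i \ell_i - r + 1$.

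The main obstacle I expect is making the correlation step honest for \emph{odd}-length petals. For even petals, $W_\ell(x,x)/\mathbb{E}_x W_\ell(x,x) \to 1$ pointwise-in-$L^1$ is a clean consequence of almost-regularity plus the spectral gap forced by local density (the top eigenvector is $o(1)$-close to the all-ones vector in $\ell_2$), so those factors behave like constants and drop out of the product for free. For odd petals, $W_{2m+1}(x,x)$ is a difference of positive and negative eigenvalue contributions, and although its \emph{sum} over $x$ is $\ge (1-o(1))(dn)^{2m+1}$ by Reiher, controlling its \emph{distribution} — ruling out that it is concentrated on a small vertex set — is more delicate and is really where one must invoke the full strength of the odd-cycle KNRS proof rather than just its consequence for the trace. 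Concretely, the fix is to apply the KNRS bound not to $G$ itself but to suitable vertex subsets: if the good contribution of a petal were concentrated on a set $U$ with $|U| \ge \varepsilon n$, one still has $e(G[U]) \ge d\binom{|U|}{2}$, and one runs the whole factorisation~\eqref{eq:flower-factor} with the outer sum restricted to $x \in U$ and the walks forced to stay near $U$; iterating/averaging this localisation is the technical crux. An alternative, cleaner route that sidesteps the odd case entirely: flowers with only even petals are even generalized theta graphs with the two roots identified, so Sidorenko (hence KNRS) by Theorem~\ref{thm:main_theta_replacement}, and for odd petals one subdivides once to reduce to the even case while tracking the effect on local density via the auxiliary-graph construction from the proof of Theorem~\ref{thm:main_non_uniform}; I would present the spectral argument as the main line and this reduction as the backup.
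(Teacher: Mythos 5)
Your factorisation of $hom(F,G)$ over the image of the central vertex is the right starting point (it is also how the paper begins), but the two main pillars of your plan do not hold up in the locally dense setting. First, the reduction to an almost-regular host is not available here: unlike for Sidorenko's conjecture, where one may assume the host graphon is regular, a $(\varepsilon,d)$-dense graph cannot in general be made almost regular by discarding low-degree vertices (consider adding a few dominating vertices to a locally dense graph). This is precisely the gap between the KNRS conjecture and the strictly weaker ``regular KNRS conjecture'' that the paper attributes to Brada\v{c}, Sudakov and Wigderson; assuming regularity would only prove the weaker statement. Everything downstream of that assumption --- the spectral gap, the near-constancy of $x\mapsto W_{\ell}(x,x)$, and hence the Chebyshev/H\"older correlation step --- collapses with it. Second, even granting regularity, you correctly identify that the odd petals are unresolved: knowing $\operatorname{tr}(A^{2m+1})\ge(1-o(1))(dn)^{2m+1}$ says nothing about the pointwise distribution of $W_{2m+1}(x,x)$, and your proposed localisation fix is only a sketch of a difficulty, not a proof. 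The backup route also fails: a flower with even petals is not literally an even generalized theta graph (identifying the two roots is a different graph, and Sidorenko is not preserved under such identifications), and one cannot prove $H$ is KNRS by subdividing $H$, since that changes the pattern graph rather than the host.

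The paper's actual proof is much more elementary and sidesteps all spectral considerations. Fixing the image $x$ of the centre, it splits each odd cycle of length $2\ell_i+1$ into two paths of length $\ell_i$ from $x$ plus the single ``far'' edge $y_iy_i'$, and applies the graphon form of Reiher's lemma (\Cref{lem:Reihers_lemma}) with $f(\cdot)=W^{\ell_i}(x,\cdot)$ to that edge, gaining a factor $d$ and reducing the petal to $\bigl(\int W^{\ell_i}(x,y)\,\mathrm{d}y\bigr)^2$; each even cycle of length $2\ell_i$ reduces to the same square by Cauchy--Schwarz. A final application of Jensen's inequality over $x$ turns the product of squares into $d^r\,t_F(W)^2$ for a spider tree $F$, and trees are Sidorenko, giving $t_H(W)\ge d^{e(H)}$. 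The crucial idea you are missing is this use of Reiher's lemma on the middle edge of each odd petal, which handles odd cycles without any distributional control on closed walk counts. I would recommend abandoning the spectral line entirely and looking for a pointwise (in $x$) inequality for each petal.
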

By considering the $1$-subdivision of a flower, we see that a flower consisting of even cycles satisfies Sidorenko's conjecture, which recovers a result in~\cite{Conlon_Sidorenko_subdivision_2018}.

\medskip

\noindent\textbf{Organization.}
In Section~\ref{sec:Preliminaries} we introduce the notion of graphon and related lemmas. In Section~\ref{subsec:auxiliary_graph}, we first show that certain auxiliary graphs are locally dense and prove \Cref{thm:main_theta_replacement,thm:main_semidirect_product}. 
We prove \Cref{thm:main_non_uniform} in Section~\ref{subsec:Holder_technique} and
\Cref{thm:flower} in Section~\ref{subsec:proof_for_flowers}.
Concluding remarks are given in Section~\ref{sec:concluding}.

\section{Preliminaries}\label{sec:Preliminaries}
We will work with graphons instead of graphs. A \emph{graphon} is a measurable function $W:[0, 1]^2 \rightarrow [0, 1]$ such that $W(x, y) = W(y, x)$ for (almost) every $x, y \in [0, 1]$. A graphon $W$ is $d$-regular if $\int_{[0, 1]} W(x, y)\mathrm{d}y=d$ for (almost) every $x \in [0, 1]$.
Graphons are considered as a limit object of dense graphs and many statements of graphs have an equivalent form in terms of graphon. 
See~\cite{Lovasz_graph_limits_2012} for the theory of graphon and examples.

For a graph $H$ and a graphon $W$, the homomorphism density of $H$ in $W$ is defined as 
$$t_H(W) = \int_{[0, 1]^{v(H)}} \prod_{ij \in E(H)} W(x_i, x_j) \prod_{i \in [v(H)]} \mathrm{d}x_i.$$
With this notation, Sidorenko's conjecture can be stated in the following equivalent form.
\begin{conj}
    For every bipartite graph $H$ and graphon $W$, we have $t_H(W) \geq t_{K_2}(W)^{e(H)}$.
\end{conj}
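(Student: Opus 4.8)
The displayed statement is simply Sidorenko's conjecture (Conjecture~\ref{conj:Sidorenko}) transcribed into the language of graphons, so the task is not to prove anything new about a particular $H$ but to verify that the two formulations are equivalent. The plan is therefore to establish both implications: that the graphon inequality implies the graph inequality \eqref{eqt:sidorenko}, and conversely. Throughout, $H$ is the fixed bipartite graph of both statements; bipartiteness plays no role in the equivalence machinery itself, only in the applicability of Conjecture~\ref{conj:Sidorenko}.

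The implication from graphons to graphs is immediate. Given a finite graph $G$ on $n=v(G)$ vertices, let $W_G$ be the graphon obtained by splitting $[0,1]$ into intervals $I_1,\dots,I_n$ of length $1/n$ and setting $W_G(x,y)=1$ when $x\in I_i$, $y\in I_j$ and $ij\in E(G)$, and $W_G(x,y)=0$ otherwise. The integrand defining $t_H(W_G)$ is constant on every cell of the product partition of $[0,1]^{v(H)}$, and each such cell has measure $n^{-v(H)}$; summing over cells (equivalently, over maps $V(H)\to[n]$) gives $t_H(W_G)=n^{-v(H)}\,hom(H,G)=t_H(G)$, and likewise $t_{K_2}(W_G)=t_{K_2}(G)$. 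It matters here that $t_H$ is the \emph{homomorphism} density, normalised by the $v(H)$-th power of the vertex count, so these identities are exact and need no error term. Applying the assumed graphon inequality to $W=W_G$ then yields $t_H(G)\ge t_{K_2}(G)^{e(H)}$, which is \eqref{eqt:sidorenko}.

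For the converse I would realise an arbitrary graphon $W$ as a limit of finite graphs and pass the inequality through the limit. The cleanest route is the $W$-random graph: for each $n$, sample $x_1,\dots,x_n$ independently and uniformly from $[0,1]$ and form $\mathbb G(n,W)$ on vertex set $[n]$ by joining $i$ and $j$ with probability $W(x_i,x_j)$, independently over pairs. A standard two-step concentration argument---first averaging over the edge choices given the $x_i$, then applying a bounded-difference inequality to the sampled points---shows that for every fixed graph $F$ one has $t_F(\mathbb G(n,W))\to t_F(W)$ almost surely as $n\to\infty$; this is part of the basic graph-limit toolkit (Lov\'asz~\cite{Lovasz_graph_limits_2012}). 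Assuming Conjecture~\ref{conj:Sidorenko} and applying it to each realisation gives $t_H(\mathbb G(n,W))\ge t_{K_2}(\mathbb G(n,W))^{e(H)}$ for every $n$, and letting $n\to\infty$ on an almost-sure event produces $t_H(W)\ge t_{K_2}(W)^{e(H)}$; since both sides are deterministic, this is the claimed inequality. A probability-free variant instead uses that $\{W_G:G\text{ finite}\}$ is dense among graphons in the cut metric, together with continuity of $F\mapsto t_F(\cdot)$ in cut distance (the Counting Lemma), which again furnishes graphs $G_n$ with $t_F(W_{G_n})\to t_F(W)$ for all $F$.

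The only step carrying any weight is this last approximation, and even there the difficulty is bookkeeping rather than mathematics: one must check that it is the homomorphism densities---which absorb the $O(1/n)$ contribution of non-injective maps---that converge, and that the convergence for $H$ and for $K_2$ can be arranged on one common almost-sure event so that the inequality survives the limit. Both are handled by the standard machinery. The moral of the reformulation is exactly that Sidorenko's inequality is closed under taking limits of graph sequences, a fact that the graphon formalism renders transparent, and this is why the rest of the paper can work with graphons throughout.
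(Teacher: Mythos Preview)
Your reading is correct: the displayed statement is a \emph{conjecture}, not a theorem, and the paper offers no proof of it. The paper simply asserts in one sentence that this is the graphon reformulation of Conjecture~\ref{conj:Sidorenko}, deferring all of the graph-limit machinery to~\cite{Lovasz_graph_limits_2012}. Your sketch of the equivalence---embedding graphs as step graphons for one direction, and approximating an arbitrary graphon by $W$-random graphs (or cut-metric density plus the Counting Lemma) for the other---is exactly the standard argument and is correct; it supplies the details the paper chose to omit.
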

In the graph version of Sidorenko's conjecture, one may assume that the host graph $G$ is regular~\cite{Szegedy_entropy_2015}. An analogous statement for the graphon version is also true.

\begin{lemma}[\cite{coregliano_biregularity_2021}, Theorem 8.2]
    A bipartite graph $H$ is Sidorenko if and only if $t_H(W) \geq d^{e(H)}$ for every $d$-regular graphon $W$.
\end{lemma}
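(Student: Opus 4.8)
The plan for this half is to do essentially nothing. If $H$ is Sidorenko then, by the standard equivalence between the graph and graphon formulations (approximate $W$ by $W$-random graphs and use $t_F(\mathbb{G}(n,W))\to t_F(W)$; see \cite{Lovasz_graph_limits_2012}), the graphon inequality $t_H(W)\ge t_{K_2}(W)^{e(H)}$ holds for every graphon $W$. When $W$ is $d$-regular,
\[
 t_{K_2}(W)=\int_{[0,1]^2}W=\int_{[0,1]}\Big(\int_{[0,1]}W(x,y)\,\mathrm{d}y\Big)\mathrm{d}x=\int_{[0,1]}d\,\mathrm{d}x=d,
\]
so $t_H(W)\ge d^{e(H)}$. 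Regularity is not even used here; this is just the specialization of the graphon conjecture.

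\textbf{Reverse direction --- the main point.} Assume $t_H(W)\ge d^{e(H)}$ for every $d\in[0,1]$ and every $d$-regular graphon $W$; the goal is to deduce that $H$ is Sidorenko, i.e.\ $t_H(G)\ge t_{K_2}(G)^{e(H)}$ for every graph $G$. The plan is to invoke the host-graph regularity reduction of Szegedy \cite{Szegedy_entropy_2015} recalled above: it suffices to prove this inequality when $G$ is a $D$-regular graph on $n$ vertices. For such $G$ I would pass to the associated step-function graphon $W_G$; on the cell of a vertex $i$ one has $\int_{[0,1]}W_G(x,y)\,\mathrm{d}y=\deg_G(i)/n=D/n$, so $W_G$ is $(D/n)$-regular and $t_H(W_G)=t_H(G)$. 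The hypothesis then gives $t_H(G)\ge (D/n)^{e(H)}$, and since $t_{K_2}(G)=2e(G)/n^2=D/n$ this is exactly $t_H(G)\ge t_{K_2}(G)^{e(H)}$. The only thing to be careful about is matching normalizations across the two models: ``$d$-regular'' for a graphon means degree per unit mass, which for $W_G$ equals $\deg/n$, and this is precisely the edge density $t_{K_2}$ in both the graph and the graphon settings.

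\textbf{A self-contained alternative, and the real obstacle.} If one wants to avoid citing Szegedy's reduction, the plan becomes to regularize graphons directly: prove that every graphon $W$ admits a $t_{K_2}(W)$-regular graphon $W'$ with $t_H(W')\le t_H(W)$, whence $t_H(W)\ge t_H(W')\ge t_{K_2}(W)^{e(H)}$ by hypothesis. Writing $D(x)=\int W(x,y)\,\mathrm{d}y$ and $d=\int D$ (and reducing to $D>0$ a.e.), the natural candidate is the degree reweighting $\widetilde W(x,y)=d^{2}W(x,y)/(D(x)D(y))$ transported to the probability space on $[0,1]$ whose density is $D/d$; this is symmetric and exactly $d$-regular there. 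I expect the main obstacle to be twofold. First, $\widetilde W$ need not be $[0,1]$-valued, so it is not literally a graphon: one must either truncate and pass to a limit, or sidestep this by extracting a minimizer of $t_H$ over density-$d$ graphons by compactness and perturbing it toward regularity. Second, and more seriously, one must show the reweighting does not increase $t_H$; this is where bipartiteness is essential --- after fixing a bipartition $V(H)=A\sqcup B$ and expanding $\prod_{ij\in E(H)}\widetilde W(x_i,x_j)$, the degree factors are reassembled vertex by vertex and controlled by Hölder's inequality applied edge by edge, the asymmetry of the $A$- and $B$-roles being exactly what makes the estimate go the right way. For non-bipartite $H$ this step genuinely fails, consistent with the bipartiteness hypothesis. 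This last estimate is the content packaged by Theorem 8.2 of \cite{coregliano_biregularity_2021}.
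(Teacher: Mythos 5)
The paper does not prove this lemma at all --- it is imported verbatim from \cite{coregliano_biregularity_2021}, Theorem 8.2 --- so there is no in-paper argument to compare against. Your forward direction is complete and correct: once the standard graph/graphon transfer is accepted, it is the trivial specialization to regular $W$, where $t_{K_2}(W)=d$.

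For the reverse direction, your main route is logically valid but proves essentially nothing: the regularity reduction you invoke from \cite{Szegedy_entropy_2015} (``it suffices to verify Sidorenko's inequality for regular host graphs'') is, up to the routine graph--graphon translation you carry out with $W_G$, precisely the content of the lemma itself. So that route amounts to observing that the lemma is equivalent to an already-known theorem; this is acceptable as a citation (which is all the paper does), but it is not a derivation. Your self-contained alternative does identify the correct mechanism: pass to the degree-biased reweighting $\widetilde W(x,y)=d^2W(x,y)/(D(x)D(y))$ on the probability measure with density $D/d$, which is symmetric and $d$-regular, and then show that this regularization does not increase $t_H$ via a H\"older argument organized along the bipartition of $H$. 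You also correctly flag the two genuine obstacles: $\widetilde W$ need not be $[0,1]$-valued, and the monotonicity of $t_H$ under the reweighting is exactly where bipartiteness must enter. But that decisive inequality --- that the regularized object has homomorphism density at most $t_H(W)$ while its edge density is still $d$ --- is only described, not established, and it is the entire substance of the lemma. As written, the proposal is a correct reduction plus an accurate roadmap rather than a complete proof; to close it you would need to actually carry out the edge-by-edge H\"older estimate (or the entropy argument) that Theorem 8.2 of \cite{coregliano_biregularity_2021} packages.
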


We can also consider KNRS conjecture in terms of graphon. We first define a notion that corresponds to locally denseness. Let $\lambda$ be the standard Lebesgue measure on $[0, 1]$.
\begin{defn}
    A graphon $W$ is $d$-locally dense if for every measurable subset $S \subseteq [0, 1]$, the following holds:
    $$\int_{S \times S} W(x, y) \mathrm{d}x\mathrm{d}y \geq d \lambda(S)^2.$$
\end{defn}
The graphon version of KNRS conjecture reads as follows.
\begin{lemma}[\cite{bradac_counting_2024}, Lemma 2.6]
    A graph $H$ is KNRS if and only if $t_H(W) \geq d^{e(H)}$ for every $d$-locally dense graphon $W$.
\end{lemma}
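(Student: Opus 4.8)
The plan is to prove the two implications separately: for ``$d$-locally dense graphons $\Rightarrow$ $H$ is KNRS'' I would use compactness of graphon space plus a transfer lemma saying that $(\varepsilon,d)$-density survives to a graphon limit, and for the converse I would approximate a $d$-locally dense graphon by $W$-random graphs, which automatically inherit $(\varepsilon,d-o(1))$-density.

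For the first direction I would argue by contradiction. If $H$ is not KNRS there are $d,\eta\in(0,1)$ and graphs $G_k$ with $v(G_k)\to\infty$ that are $(\varepsilon_k,d)$-dense for some $\varepsilon_k\to 0$ while $t_H(G_k)<(1-\eta)d^{e(H)}$. By the Lov\'{a}sz and Szegedy compactness theorem (see \cite{Lovasz_graph_limits_2012}), after passing to a subsequence there are a graphon $W$ and measure-preserving maps $\varphi_k$ with $\|W_{G_k}^{\varphi_k}-W\|_\square\to 0$, and by the counting lemma $t_H(W)=\lim_k t_H(G_k)\le(1-\eta)d^{e(H)}<d^{e(H)}$. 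The crucial step is to show that $W$ is $d$-locally dense, which then contradicts the hypothesis. Since the cut norm controls integrals over squares, for any measurable $S$ we have $\int_{S\times S}W=\lim_k\int_{\varphi_k^{-1}(S)\times\varphi_k^{-1}(S)}W_{G_k}$, so it suffices to prove the following \emph{weighted local density} statement for a single $(\varepsilon,d)$-dense graph $G$ on $n$ vertices: whenever $s_i\in[0,1]$ ($i\in V(G)$) with $\tfrac1n\sum_i s_i=\alpha$, then $\tfrac{2}{n^2}\sum_{ij\in E(G)}s_is_j\ge d\alpha^2-d\varepsilon^2-d/n$. I would prove this by random rounding: put each vertex $i$ into $U$ independently with probability $s_i$. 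For \emph{every} set $U$ one has $e(G[U])\ge d\binom{|U|}{2}-\tfrac d2\varepsilon^2n^2$, the correction term absorbing the small sets on which the $(\varepsilon,d)$-density hypothesis is vacuous; taking expectations and using $\mathbb{E}\binom{|U|}{2}=\sum_{i<j}s_is_j\ge\tfrac12(n\alpha)(n\alpha-1)$ gives the claim. Letting $\varepsilon_k\to0$ and $v(G_k)\to\infty$ then yields $\int_{S\times S}W\ge d\lambda(S)^2$ for all $S$, so $W$ is $d$-locally dense and we reach a contradiction.

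For the converse, let $W$ be $d$-locally dense; I must show $t_H(W)\ge d^{e(H)}$. Fix small $\delta,\eta>0$ and let $\varepsilon=\varepsilon(d-\delta,\eta,H)$ be the constant from the KNRS property applied with density parameter $d-\delta$. Take $G=\mathbb{G}(n,W)$, the $W$-random graph on $n$ vertices, for $n$ large. Almost surely $\delta_\square(W_G,W)\to 0$, so whp there is a measure-preserving $\varphi_n$ with $\|W_G^{\varphi_n}-W\|_\square\le 2/n$; then $|t_H(G)-t_H(W)|\le\eta$, and for every $U\subseteq V(G)$ with $|U|\ge\varepsilon n$, writing $S_U$ for the corresponding union of $G$'s vertex blocks, one gets $\tfrac{2}{n^2}e(G[U])=\int_{S_U\times S_U}W_G\ge\int_{\varphi_n^{-1}(S_U)\times\varphi_n^{-1}(S_U)}W-\|W_G^{\varphi_n}-W\|_\square\ge d\lambda(S_U)^2-2/n$, hence $e(G[U])\ge d\binom{|U|}{2}-n\ge(d-\delta)\binom{|U|}{2}$ for $n$ large (using $|U|\ge\varepsilon n$). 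So whp $G$ is $(\varepsilon,d-\delta)$-dense. Choosing such a $G$, the KNRS property gives $t_H(G)\ge(1-\eta)(d-\delta)^{e(H)}$, whence $t_H(W)\ge t_H(G)-\eta\ge(1-\eta)(d-\delta)^{e(H)}-\eta$; letting $\delta\to0$ and then $\eta\to0$ gives $t_H(W)\ge d^{e(H)}$, as required.

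The main obstacle, common to both directions, is that ``$d$-locally dense'' quantifies over \emph{all} measurable subsets, including arbitrarily small ones, whereas discrete $(\varepsilon,d)$-density is silent on sets smaller than $\varepsilon n$ and, moreover, in the first direction the sets $\varphi_k^{-1}(S)$ produced by the limit are \emph{fractional} relative to the block structure of $W_{G_k}$. The reconciling observations are that (i) for a small set $S$ the target $d\lambda(S)^2$ is itself small, so additive errors of order $\varepsilon^2$ or $o(1)$ carried through the limit/sampling argument do no harm once one restricts to the sets of measure $\gtrsim\varepsilon$ that the KNRS property actually concerns, and (ii) the convexity inequality $\mathbb{E}[\lambda(R)^2]\ge(\mathbb{E}\lambda(R))^2$ (here in the form $\mathbb{E}\binom{|U|}{2}\ge\tfrac12(n\alpha)(n\alpha-1)$) is exactly what makes local density robust under averaging over a random fractional subset. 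I expect verifying the weighted local density lemma and tracking these error terms to be the only nonroutine part; everything else is standard graphon machinery.
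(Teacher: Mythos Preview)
The paper does not supply its own proof of this lemma; it is quoted as a black box from \cite{bradac_counting_2024}. Your compactness/sampling argument is the standard route for such graph--graphon transference statements and is essentially correct, so there is nothing substantive to compare against.

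A couple of minor remarks on the execution. First, your ``weighted local density'' claim for a single $(\varepsilon,d)$-dense graph is precisely Reiher's lemma (stated in this paper as \Cref{lem:Reiher_lemma_original}), and your random-rounding proof of it is Reiher's original argument; you may as well cite it. Second, the specific bound $\|W_G^{\varphi_n}-W\|_\square\le 2/n$ that you assert for the $W$-random graph is not what the sampling lemma actually gives (the known rate is of order $(\log n)^{-1/2}$), but since any $o(1)$ bound suffices here this does no damage. Finally, the claim in the first direction that one may take $v(G_k)\to\infty$ deserves a word of justification: with the paper's literal formulation of KNRS (no lower bound on $v(G)$), a fixed small graph such as $K_2$ is $(\varepsilon,d)$-dense for every $\varepsilon>0$ yet has $t_H=0$ for nonbipartite $H$, so an implicit ``$n$ sufficiently large'' must be read into the definition for the equivalence to hold at all---this is the usual convention in the literature and is certainly the intended reading.
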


Reiher~\cite{Reiher_KNRS_oddcycle_2014} proved the following lemma which is useful for KNRS conjecture. It says that if a graph is locally dense, then it is also locally dense in a weighted sense.
\begin{lemma}[\cite{Reiher_KNRS_oddcycle_2014}]\label{lem:Reiher_lemma_original}
    Let $G$ be an $n$-vertex $(\varepsilon, d)$-dense graph.
    Let $f:V(G) \rightarrow [0, 1]$ be a function such that $\sum_{v \in V(G)} f(v) \geq \varepsilon n$.
    Then $\sum_{uv \in E(G)} f(u)f(v) \geq d \left(\sum_{v \in V(G)} f(v)\right)^2-n.$
\end{lemma}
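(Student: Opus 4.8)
The plan is to recast the stated inequality as a statement about the minimum of a quadratic function over a polytope, and then to exploit that this function is concave in all the directions that matter. Write $n=v(G)$, let $A$ be the adjacency matrix of $G$, and for $f\colon V(G)\to[0,1]$ put
\[
\Psi(f)\;:=\;f^{\mathsf T}Af\;-\;d\Big(\sum_{v}f(v)\Big)^{2}\;+\;n
\]
(for the stated slack $-n$ to be correct, the left side $\sum_{uv\in E(G)}f(u)f(v)$ should be read as $f^{\mathsf T}Af$, the sum over ordered adjacent pairs). The assertion of the lemma is exactly that $\Psi(f)\ge 0$ for every $f$ in the polytope $P:=\big\{f\in[0,1]^{V(G)}:\sum_{v}f(v)\ge\varepsilon n\big\}$.

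The first step is to record two \emph{concavity} facts about the quadratic $\Psi$. Along any axis-parallel line $f_{0}+t\,e_{w}$ the coefficient of $t^{2}$ in $\Psi$ equals $-d<0$ (only the $-d(\sum f)^{2}$ term contributes, as $A_{ww}=0$); and along any ``swap'' line $f_{0}+t(e_{w}-e_{z})$, which keeps $\sum_{v}f(v)$ fixed, it equals $-2A_{wz}\le 0$. Since $P$ is a cube intersected with a half-space, every edge of $P$ points in a coordinate direction $e_{w}$ or a swap direction $e_{w}-e_{z}$ (the latter along the cutting hyperplane $\sum_{v}f(v)=\varepsilon n$); hence $\Psi$ is concave along every edge direction of $P$. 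A standard extreme-point argument — a minimiser in the relative interior of a positive-dimensional face $F$ would force $\Psi|_{F}$ to be affine, since $\Psi$ has non-positive curvature along the edge directions of $F$ and those directions span $F$ — then puts the minimum of $\Psi$ over $P$ at a vertex of $P$.

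It remains to verify $\Psi\ge 0$ at the vertices of $P$, of which there are two types. \textbf{(a)} Cube vertices lying in $P$, i.e.\ $f=\mathbf 1_{S}$ with $|S|\ge\varepsilon n$: here $f^{\mathsf T}Af=2e(G[S])\ge d\,|S|(|S|-1)$ by $(\varepsilon,d)$-density, which immediately gives $\Psi(\mathbf 1_{S})\ge n-d|S|\ge 0$. \textbf{(b)} Vertices created by the cut, of the form $f=\mathbf 1_{S}+\beta\,e_{w}$ with $w\notin S$, $\beta\in(0,1)$ and $|S|+\beta=\varepsilon n$, so $|S|=\lfloor\varepsilon n\rfloor$. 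The set $S$ itself is too small for density, but the rounded-up set $T:=S\cup\{w\}$ has $|T|=\lceil\varepsilon n\rceil\ge\varepsilon n$, so density applied to $T$ gives $e(G[S])+|N_{G}(w)\cap S|=e(G[T])\ge d\binom{|T|}{2}$; feeding this into $f^{\mathsf T}Af=2e(G[S])+2\beta|N_{G}(w)\cap S|$ and using $|T|=|S|+1\le\varepsilon n+1$, a short computation gives $\Psi(f)\ge 0$ (in the relevant range of $\varepsilon$): the smallness of $\beta$ is compensated by $|S|$ being within $1$ of $\varepsilon n$, and every discrepancy introduced is $O(n)$, which is precisely the role of the ``$-n$'' slack.

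I expect case \textbf{(b)} to be the only genuine obstacle. The natural move — applying density to $S$ — is illegal because $|S|<\varepsilon n$, and the fix is to pass to $T=S\cup\{w\}$, which stays inside the range where density is available at a cost of only an additive $O(n)$. (A more computational alternative is a layer-cake decomposition $f=\int_{0}^{1}\mathbf 1_{\{f>t\}}\,dt$ together with density applied level-set by level-set; this works too, but keeping track of the error coming from the interaction of level sets of different heights is fiddlier than the extreme-point route, so I would not take that path.)
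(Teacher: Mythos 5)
A word on context first: the paper does not prove this lemma at all --- it is quoted from Reiher purely as motivation, and the version actually used later is the graphon form (\Cref{lem:Reihers_lemma}), in which both the size threshold and the additive error disappear. So your argument has to stand on its own. Its skeleton is the right one: reading the edge sum as $f^{\mathsf T}Af$ is indeed forced (with the unordered reading the statement already fails for $G=K_n$, $f\equiv 1$), and the reduction to vertices of $P$ via concavity of $\Psi$ along the directions $e_w$ and $e_w-e_z$ is essentially the standard variational argument for lemmas of this type. (One small imprecision: non-positive curvature along edge directions does not make $\Psi|_F$ affine; the correct conclusion is that $\Psi$ is constant along each edge-direction line through an interior minimiser, which lets you walk to the relative boundary of $F$ and induct on the dimension of the face. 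That is routine to repair.)

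The genuine gap is case \textbf{(b)}, and your hedge ``in the relevant range of $\varepsilon$'' is pointing at a real obstruction. Writing $m=|S|$ and $D=|N_G(w)\cap S|$, the route you describe gives $f^{\mathsf T}Af=2e(G[T])-2(1-\beta)D\ge dm(m+1)-2(1-\beta)D$, and after the only available bound $D\le m$ one gets $\Psi(f)\ge n+\bigl(d(1-2\beta)-2(1-\beta)\bigr)m-d\beta^2$. The coefficient of $m$ lies in $[d-2,-d]$ and is close to $d-2<-1$ for small $\beta$, so the bound turns negative once $m$ exceeds roughly $n/(2-d)$, i.e.\ for $\varepsilon$ close to $1$. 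This is not merely slack lost in the estimate: take $n=1000$, $d=0.1$, let $S$ have $900$ vertices carrying exactly $0.1\binom{901}{2}-900=39645$ edges arranged near-regularly, and let the remaining $100$ vertices form a clique joined completely to $S$. One checks this graph is $(\varepsilon,0.1)$-dense for $\varepsilon n=900.1$ (the only tight sets are those of the form $S\cup\{w\}$), yet $f=\mathbf{1}_S+0.1\,e_w$ satisfies $\sum_v f(v)=\varepsilon n$ and $f^{\mathsf T}Af=79470<80018.001=d(\varepsilon n)^2-n$. So at a fractional vertex the inequality can fail by $\Theta(n)$ beyond the allotted slack, and case \textbf{(b)} cannot be completed for the statement as transcribed: the error term must be enlarged (with $-2n$ in place of $-n$ your computation above does close, since the coefficient of $m$ is at least $d-2\ge-2$), or one must work with Reiher's original normalisation of local density. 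Since the lemma is decorative here and the graphon version actually used in the paper is clean, this is best read as a transcription issue in the source rather than a flaw in your strategy --- but as a proof of the literal statement, the argument does not go through.
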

We will use the following graphon version.
\begin{lemma}[\cite{bradac_counting_2024}, Lemma 2.8]\label{lem:Reihers_lemma}
    Let $W$ be a $d$-locally dense graphon and $f:[0, 1] \to [0, 1]$ be a measurable function.
    Then, 
    $$\int_{[0, 1]^2} f(x)f(y)W(x, y)\mathrm{d}x\mathrm{d}y \geq d\left(\int_{[0, 1]} f(x)\mathrm{d}x\right)^2.$$
\end{lemma}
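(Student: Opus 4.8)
The plan is to deduce the graphon inequality from its graph counterpart \Cref{lem:Reiher_lemma_original} by a standard graph--limit approximation; the point will be that the property of being (locally) dense is preserved, up to $o(1)$, when one passes to a sufficiently fine finite sample of $W$.

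First I would dispose of the degenerate case: if $\alpha := \int_{[0,1]} f\,\mathrm{d}x = 0$ then $f = 0$ almost everywhere and both sides vanish, so assume $\alpha > 0$. For each $n$ (one may take $n = 2^k$, so the partitions are nested, which is convenient below), choose graphs $H_n$ on vertex set $[n]$ whose associated step graphons $W_{H_n}$ — constant on the cells $I_i \times I_j$ of the partition $[0,1] = I_1 \cup \dots \cup I_n$ into $n$ equal intervals — satisfy $\|W - W_{H_n}\|_\square \to 0$; such $H_n$ exist, e.g.\ $W$-random graphs, by the sampling lemma for graphons (see \cite{Lovasz_graph_limits_2012}). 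The key observation is that local density transfers: for any $U \subseteq [n]$, testing with the cut-set $S = \bigcup_{i \in U} I_i$ gives
$$\tfrac{2}{n^2}\, e_{H_n}(U) \;=\; \int_{S \times S} W_{H_n}\, \mathrm{d}x\,\mathrm{d}y \;\geq\; \int_{S \times S} W\, \mathrm{d}x\,\mathrm{d}y - \|W-W_{H_n}\|_\square \;\geq\; d\,\lambda(S)^2 - o(1) \;=\; d\,\tfrac{|U|^2}{n^2} - o(1),$$
so that $e_{H_n}(U) \geq \tfrac{d}{2}|U|^2 - o(n^2)$; in particular $H_n$ is $(\varepsilon_n, d - \varepsilon_n)$-dense for a suitable $\varepsilon_n \to 0$.

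Next I would transport $f$ to the vertex side. Let $f_n \colon [n] \to [0,1]$ be the average of $f$ over each $I_i$ and let $\widehat{f}_n$ be the corresponding step function (the conditional expectation of $f$ onto the algebra generated by $I_1, \dots, I_n$), so that $\tfrac1n \sum_i f_n(i) = \alpha$ and hence $\sum_i f_n(i) = \alpha n \geq \varepsilon_n n$ for $n$ large. A direct computation gives $\tfrac{2}{n^2} \sum_{uv \in E(H_n)} f_n(u) f_n(v) = \int_{[0,1]^2} \widehat{f}_n(x)\widehat{f}_n(y) W_{H_n}(x,y)\,\mathrm{d}x\,\mathrm{d}y$, and this converges to $\int_{[0,1]^2} f(x)f(y)W(x,y)\,\mathrm{d}x\,\mathrm{d}y$: the gap between $\int \widehat{f}_n\widehat{f}_n W_{H_n}$ and $\int \widehat{f}_n\widehat{f}_n W$ is at most $\|W - W_{H_n}\|_\square \to 0$ (the standard counting-lemma estimate, since $\widehat{f}_n$ is constant on each $I_i$), while $\int \widehat{f}_n\widehat{f}_n W \to \int ffW$ because $\widehat{f}_n \to f$ in $L^1$ and $W \leq 1$. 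Applying \Cref{lem:Reiher_lemma_original} to $H_n$ with the vertex weighting $f_n$, dividing by $n^2$ and taking $n \to \infty$, would then give $\int_{[0,1]^2} f(x)f(y)W(x,y)\,\mathrm{d}x\,\mathrm{d}y \geq d\alpha^2 = d\big(\int_{[0,1]} f\,\mathrm{d}x\big)^2$.

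The step I expect to be the main obstacle is the transfer of local density from $W$ to the finite samples $H_n$: the reduction to the purely combinatorial \Cref{lem:Reiher_lemma_original} is legitimate precisely because cut-distance-close objects have essentially the same local density, which is exactly the displayed chain of inequalities above. A secondary nuisance is tracking the normalisation constants — ordered versus unordered edge sums, $|U|^2$ versus $\binom{|U|}{2}$, and the error term $n$ in \Cref{lem:Reiher_lemma_original} — so that the conclusion emerges with the sharp constant $d$ rather than $2d$ or $d/2$. As an aside, one could try to stay within the graphon world: the layer-cake decomposition $f = \int_0^1 \mathbf{1}_{\{f > t\}}\,\mathrm{d}t$ reduces the lemma, term by term over the nested superlevel sets $\{f > s\} \supseteq \{f > t\}$, to the inequality $\int_{A \times B} W \geq d\,\lambda(A)\lambda(B)$ for all measurable $A \subseteq B$; but I expect that "one-sided" weighted local density to require the same Reiher-type input, so I would carry out the approximation argument above.
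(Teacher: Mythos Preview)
The paper does not prove this lemma; it is quoted from \cite{bradac_counting_2024} and used as a black box, so there is no argument in the paper to compare yours against.

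On its own merits your approximation route is correct. Cut-norm convergence $W_{H_n}\to W$ for $W$-random graphs is standard, and your displayed chain of inequalities legitimately transfers $d$-local density of $W$ to $(\varepsilon_n, d-\varepsilon_n)$-density of $H_n$, since $\bigl|\int_{S\times S}(W_{H_n}-W)\bigr|\le \|W_{H_n}-W\|_{\square}$ for every measurable $S$. The passage $\int \widehat f_n\,\widehat f_n\, W_{H_n}\to \int f\,f\,W$ is justified as you indicate: the bilinear form against $W_{H_n}-W$ is controlled by a constant multiple of the cut norm for $[0,1]$-valued test functions, and $\widehat f_n\to f$ in $L^1$ together with $0\le W,\widehat f_n\le 1$ handles the remaining gap. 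Invoking \Cref{lem:Reiher_lemma_original} and sending $n\to\infty$ then gives the conclusion. Your caveat about normalisations is well placed --- as stated here, \Cref{lem:Reiher_lemma_original} with an unordered edge sum on the left and $d(\sum f)^2$ on the right is visibly off by a factor of $2$ (test $f\equiv 1$ on $K_n$), so one must check the source to get the sharp constant, but this is bookkeeping, not a flaw in the strategy. Your aside on the layer-cake route is also accurate: square local density alone does not yield $\int_{A\times B}W\ge d\,\lambda(A)\lambda(B)$ even for nested $B\subseteq A$, so that shortcut would need the same Reiher-type input anyway.
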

We also note the following extension of Reiher's lemma proved by Brada\v{c}, Sudakov, and Wigderson~\cite{bradac_counting_2024}. Roughly speaking, the original Reiher's lemma is for vertex-weighted counting of $K_2$ and it extends to vertex-weighted counting of a KNRS graph $H$. 
\begin{lemma}[\cite{bradac_counting_2024}, Lemma 2.10]\label{lem:Reiher_lemma_for_general_graph}
    Let $H$ be a KNRS graph and $W$ be a $d$-locally dense graphon. 
    Let $f:[0, 1] \to [0, 1]$ be a measurable function.
    Then, 
    $$\int_{[0, 1]^{v(H)}} \prod_{i \in [v(H)]} f(x_i) \prod_{ij \in E(H)} W(x_i, x_j) \prod_{i \in [v(H)]} \mathrm{d}x_i \geq d^{e(H)}\left(\int_{[0, 1]} f(x)\mathrm{d}x\right)^{v(H)}.$$
\end{lemma}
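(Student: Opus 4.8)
The plan is to reduce this vertex-weighted inequality for a general KNRS graph $H$ to the \emph{unweighted} KNRS inequality for $H$, applied to a suitably chosen $d$-locally dense graphon. The device is to encode the weight function $f$ as the indicator of a measurable set in an enlarged probability space; this is exactly the ``inflation plus cut'' trick behind the $H=K_2$ case (\Cref{lem:Reihers_lemma}), now carried out for an arbitrary $H$. If $\mu:=\int_{[0,1]}f=0$ then $f=0$ almost everywhere and both sides of the claimed inequality vanish, so from now on assume $\mu>0$.

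First I would work on the product probability space $\Omega=[0,1]^2$ with the Lebesgue measure $\lambda_2$ and introduce the inflated graphon $\hat W\big((x,s),(y,t)\big)=W(x,y)$, which ignores the second coordinates. I claim $\hat W$ is $d$-locally dense on $\Omega$: for a measurable set $T\subseteq\Omega$, let $g(x)=\lambda\big(\{\,s:(x,s)\in T\,\}\big)\in[0,1]$ be the fibre length of $T$ over $x$; then Fubini gives
\[
 \int_{T\times T}\hat W\,\mathrm{d}\lambda_2^2=\int_{[0,1]^2}g(x)g(y)W(x,y)\,\mathrm{d}x\,\mathrm{d}y\ \ge\ d\Big(\int_{[0,1]} g\Big)^2=d\,\lambda_2(T)^2,
\]
where the inequality is \Cref{lem:Reihers_lemma} applied to the function $g$.

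Next I would cut along the graph of $f$. Put $S=\{(x,s)\in\Omega:s\le f(x)\}$, so $\lambda_2(S)=\int_{[0,1]} f=\mu$, restrict $\hat W$ to $S$, and equip $S$ with the renormalized measure $\nu=\mu^{-1}\lambda_2|_S$. Since $\mu>0$, $(S,\nu)$ is an atomless standard probability space, hence measure-isomorphic to $([0,1],\lambda)$; transporting along such an isomorphism turns $\hat W|_S$ into an honest graphon, which is still $d$-locally dense because for every measurable $T\subseteq S$ the display above yields $\int_{T\times T}\hat W\,\mathrm{d}\nu^2=\mu^{-2}\int_{T\times T}\hat W\,\mathrm{d}\lambda_2^2\ge d\,\nu(T)^2$. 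Applying the graphon form of the KNRS property of $H$ to $\hat W|_S$ gives $t_H(\hat W|_S)\ge d^{e(H)}$. It remains to unwind this density: computing it against $\nu$ and then integrating out each auxiliary coordinate $s_i$ over the fibre $\{s_i\le f(x_i)\}$, which has length $f(x_i)$,
\begin{align*}
 t_H(\hat W|_S)&=\frac1{\mu^{v(H)}}\int_{S^{v(H)}}\ \prod_{ij\in E(H)}W(x_i,x_j)\ \prod_{i\in[v(H)]}\mathrm{d}\lambda_2(x_i,s_i)\\
 &=\frac1{\mu^{v(H)}}\int_{[0,1]^{v(H)}}\prod_{i\in[v(H)]} f(x_i)\prod_{ij\in E(H)}W(x_i,x_j)\prod_{i\in[v(H)]}\mathrm{d}x_i .
\end{align*}
Combining this with $t_H(\hat W|_S)\ge d^{e(H)}$ and multiplying through by $\mu^{v(H)}=\big(\int_{[0,1]} f\big)^{v(H)}$ gives precisely the claimed bound.

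The argument is essentially bookkeeping once the construction is in place; the one conceptual step --- rather than a genuine obstacle --- is recognizing that adjoining an extra $[0,1]$ coordinate and cutting at height $f(x)$ converts the vertex-weighted count for $H$ into an ordinary homomorphism density, to which the unweighted KNRS property applies verbatim. The only points needing care are the passage from a graphon on the abstract space $(S,\nu)$ back to the interval --- harmless, since $t_H$ and local density are invariant under measure isomorphism --- and the appeal to the $K_2$ case (\Cref{lem:Reihers_lemma}) to certify that the inflation $\hat W$ is $d$-locally dense, which is what makes it possible to build a $d$-locally dense graphon on $S$ at all.
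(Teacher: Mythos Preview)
The paper does not give its own proof of this lemma; it is quoted from \cite{bradac_counting_2024} and used as a black box. There is therefore nothing in the present paper to compare against.

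That said, your argument is correct and is essentially the standard proof. Lifting $W$ to $\hat W$ on $[0,1]^2$, verifying $d$-local density of $\hat W$ via \Cref{lem:Reihers_lemma} applied to the fibre-length function $g$, cutting along $S=\{(x,s):s\le f(x)\}$, renormalising, and then invoking the unweighted KNRS inequality on the resulting graphon is exactly the ``area-under-the-graph'' trick that turns vertex weights into indicators. The two points you flag as needing care --- the measure-isomorphism between $(S,\nu)$ and $([0,1],\lambda)$, and the use of the $K_2$ case to certify local density on the product space --- are both legitimate and correctly handled. The edge case $\mu=0$ is also dispatched cleanly.
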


We need the following version of H\"{o}lder's inequality.
\begin{theorem}[H\"{o}lder's inequality]\label{thm:Holder}
    Let $p_1, \ldots, p_k, r \in (0, \infty)$ satisfies $\frac{1}{p_1} + \ldots + \frac{1}{p_k} = \frac{1}{r}$. Then for any measurable functions $f_1, \ldots, f_k:[0, 1] \rightarrow \mathbb{R}$, we have 
     $$\prod_{i=1}^k \left(\int_{[0, 1]}|f_i|^{p_i} \mathrm{d}x\right)^{1/p_i} \geq \left(\int_{[0, 1]} \left(\prod_{i=1}^k |f_i| \right)^{r}\mathrm{d}x\right)^{1/r}.$$
\end{theorem}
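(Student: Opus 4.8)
The plan is to reduce this to the weighted arithmetic--geometric mean inequality. First I would substitute $g_i := |f_i|^r$ and $q_i := p_i/r$, so that the hypothesis $\sum_i 1/p_i = 1/r$ becomes $\sum_i 1/q_i = 1$. A short computation with exponents (using $|f_i|^{p_i} = g_i^{q_i}$ and $1/p_i = 1/(rq_i)$, and noting that raising both sides to the positive power $r$ preserves the inequality) shows the claimed inequality is equivalent to
$$\int_{[0,1]} \prod_{i=1}^k g_i \,\mathrm{d}x \;\leq\; \prod_{i=1}^k \left(\int_{[0,1]} g_i^{\,q_i}\,\mathrm{d}x\right)^{1/q_i}$$
for nonnegative measurable functions $g_1,\dots,g_k$. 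This is the classical generalized H\"{o}lder inequality, and it is what I would prove.

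Next I would dispose of the degenerate cases. If $\int g_i^{q_i}=0$ for some $i$, then $g_i=0$ almost everywhere, hence $\prod_j g_j=0$ a.e.\ and both sides vanish; if $\int g_j^{q_j}=+\infty$ for some $j$ (a priori $g_j^{q_j}$ need not be integrable), the right-hand side is $+\infty$ and there is nothing to prove. So I may assume $0<\int g_i^{q_i}<\infty$ for every $i$, and after rescaling each $g_i$ by a positive constant---which scales both sides by the same factor---I may assume $\int_{[0,1]} g_i^{q_i}\,\mathrm{d}x = 1$ for all $i$; the goal becomes $\int_{[0,1]} \prod_i g_i \,\mathrm{d}x \leq 1$.

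The heart of the argument is a pointwise bound from weighted AM--GM: for nonnegative reals $a_1,\dots,a_k$ and weights $\lambda_i\geq 0$ with $\sum_i\lambda_i=1$ one has $\prod_i a_i^{\lambda_i}\leq \sum_i \lambda_i a_i$, immediate from concavity of $\log$ via Jensen's inequality (with the convention that both sides are $0$ when some $a_i=0$). Applying this at each point $x$ with $a_i=g_i(x)^{q_i}$ and $\lambda_i = 1/q_i$ gives
$$\prod_{i=1}^k g_i(x) = \prod_{i=1}^k \bigl(g_i(x)^{q_i}\bigr)^{1/q_i} \leq \sum_{i=1}^k \frac{1}{q_i}\, g_i(x)^{q_i}.$$
Integrating over $[0,1]$ and using $\int g_i^{q_i}=1$ yields $\int \prod_i g_i \leq \sum_i \tfrac{1}{q_i} = 1$, as desired. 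Undoing the substitution $g_i=|f_i|^r$, $q_i=p_i/r$ recovers the stated inequality.

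There is no genuine obstacle here: the only points needing care are the bookkeeping with exponents in the reduction step and the degenerate cases where an integral is $0$ or $+\infty$. The inequality $\prod_i a_i^{\lambda_i}\leq\sum_i\lambda_i a_i$ is completely standard; alternatively one could build the $k$-fold inequality by induction from the two-function case, splitting exponents via $\tfrac{1}{q_1}+\bigl(1-\tfrac{1}{q_1}\bigr)=1$, but the direct AM--GM route is cleaner and avoids repeated applications of the base case.
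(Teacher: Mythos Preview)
Your proof is correct and follows the standard textbook route: reduce to the case $r=1$ by the substitution $g_i=|f_i|^r$, normalize so that each $\int g_i^{q_i}=1$, and apply the weighted AM--GM (Young's) inequality pointwise. The bookkeeping and degenerate-case handling are all fine.

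There is nothing to compare against: the paper states H\"{o}lder's inequality as a known classical result in the preliminaries section and does not give a proof. It is invoked later in the proof of Lemma~\ref{lem:uniformization_via_Holder}, but the inequality itself is simply cited, not derived.
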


\section{Proofs of main results}\label{sec:proofs}

\subsection{Locally dense auxiliary graph}\label{subsec:auxiliary_graph}
For a graphon $W$ and a graph $F$ with vertex set $[f]$ and two (distinct) roots $i,j \in [f]$, we define \emph{$F$-counting kernel} $W^F$ as 
$$W^F(x_i, x_j):= \int_{[0, 1]^{f-2}} \prod_{i_1i_2 \in E(F)} W(x_{i_1}, x_{i_2}) \prod_{k \in [f] \setminus \{i, j\}} \mathrm{d}x_k.$$
We note that if $F$ is not symmetric, then $W^F$ may not be a graphon. 
However, in this paper, we assume that $F$ has an automorphism that swaps the two roots, and therefore $W^F$ is a graphon to avoid some technicalities. 
When $F$ is a path of length $k$ with two roots being its end vertices, we denote $W^F$ by $W^k$. 
Note that $W^k$ is equivalent to the $k$-th matrix power of $W$. 
The following property of $W^F$ is useful.
\begin{lemma}\label{lem:relation_of_aux_and_original}
    Let $F$ be a graph with two roots such that there exists an automorphism that swaps its two roots and $H$ be a graph. 
    Let $H'$ be a graph obtained by replacing each edge $e$ of $H$ with a copy of $F$ such that the roots of $F$ are identified with the end vertices of $e$. Then $t_{H'}(W) = t_H(W^F)$ for every graphon $W$.
\end{lemma}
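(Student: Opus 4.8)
The plan is to unfold both sides as iterated integrals over $[0,1]$ and to integrate out, edge by edge, the internal vertices of the copies of $F$ using Fubini's theorem. First I would fix notation: write $V(H)=[h]$, and for each edge $e=uv\in E(H)$ let $F_e$ denote the copy of $F$ used to replace $e$, with its two roots identified with $u$ and $v$, and let $\mathrm{int}(F_e)$ be its set of internal (non-root) vertices. Since distinct copies $F_e,F_{e'}$ meet only in vertices of $H$ and are otherwise disjoint, we have a disjoint-union decomposition $V(H')=[h]\sqcup\bigsqcup_{e\in E(H)}\mathrm{int}(F_e)$ and $E(H')=\bigsqcup_{e\in E(H)}E(F_e)$, where in $E(F_e)$ the two roots are renamed to the corresponding endpoints of $e$.

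Next I would write $t_{H'}(W)$ directly from the definition as $\int_{[0,1]^{V(H')}}\prod_{e\in E(H)}\prod_{ab\in E(F_e)}W(x_a,x_b)\,\prod_{i\in V(H')}\mathrm{d}x_i$. The point is that each internal vertex of $F_e$ appears in no edge outside $E(F_e)$, so Tonelli's theorem (applicable since $W$ is bounded and nonnegative, so there are no integrability issues) lets me integrate first over the variables $(x_c)_{c\in\mathrm{int}(F_e)}$ for each $e$ separately while holding the ``branch'' variables $(x_i)_{i\in[h]}$ fixed. For $e=uv$ this inner integral is exactly $W^F(x_u,x_v)$ by the definition of the $F$-counting kernel. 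Carrying this out for every $e$ leaves $t_{H'}(W)=\int_{[0,1]^{h}}\prod_{uv\in E(H)}W^F(x_u,x_v)\,\prod_{i\in[h]}\mathrm{d}x_i=t_H(W^F)$.

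The one genuine point that uses the hypothesis — and the only thing I would call an obstacle, though it is a mild one — is that the definition of $W^F(x_i,x_j)$ singles out an \emph{ordered} pair of roots, so a priori the inner integral for $e=uv$ could be $W^F(x_u,x_v)$ or $W^F(x_v,x_u)$ depending on which endpoint of $e$ is glued to which root of $F_e$ (and, more basically, $t_H(W^F)$ only makes sense once $W^F$ is a bona fide symmetric graphon). Here the assumed automorphism of $F$ swapping its two roots gives $W^F(x,y)=W^F(y,x)$ for (almost) all $x,y$, so $W^F$ is symmetric; this both makes $t_H(W^F)$ well defined independently of any orientation of $E(H)$ and removes the gluing ambiguity above. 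With that resolved, the remainder is the routine Fubini/Tonelli bookkeeping described in the previous paragraph.
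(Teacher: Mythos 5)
Your proof is correct and follows exactly the route the paper indicates (the paper omits the proof, noting it follows from writing out the definition of $t_{H'}(W)$ and integrating the variables of $V(H')\setminus V(H)$ first); your additional remark on why the root-swapping automorphism makes $W^F$ symmetric and the gluing unambiguous is a sound elaboration of the same argument.
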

As it follows from writing the definition of $t_{H'}(W)$ and integrating variables corresponding to the vertices of $V(H') \setminus V(H)$ first, we omit the proof of this lemma. 

The key lemma of this subsection is the following.
\begin{lemma}\label{lem:auxiliary_locally_dense}
    Let $W$ be a $d$-regular graphon.
    Let $\Theta$ be an even generalized theta graph.
    Then $W^{\Theta}$ is $d^{e(\Theta)}$-locally dense.
\end{lemma}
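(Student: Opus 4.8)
The plan is to reduce local density of $W^{\Theta}$ to a one-variable Cauchy--Schwarz estimate by exploiting that the constituent paths of $\Theta$ are internally vertex-disjoint and of even length. Write $\Theta$ as the union of internally disjoint paths $P_1,\dots,P_m$ of lengths $2k_1,\dots,2k_m$ joining the two roots, so that $e(\Theta)=\sum_{i=1}^m 2k_i$. Integrating out the internal vertices of each $P_i$ separately shows that $W^{\Theta}$ is the pointwise (Hadamard) product $W^{\Theta}=\prod_{i=1}^m W^{2k_i}$, where $W^{2k_i}$ denotes the $2k_i$-th matrix power of $W$. Each factor $W^{2k_i}$ is, on its own, a positive semidefinite kernel (an even power) and one checks directly that it is $d^{2k_i}$-locally dense; but, as noted in the introduction, local density is not preserved under Hadamard products, so multiplying these estimates naively does not work.

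The key step is to factor \emph{every} even path through its midpoint simultaneously. Since each $2k_i$ is even, $W^{2k_i}(x,y)=\int_0^1 W^{k_i}(x,z)\,W^{k_i}(z,y)\,\mathrm{d}z$ with $W^{k_i}$ symmetric, so by Tonelli's theorem
$$W^{\Theta}(x,y)=\int_{[0,1]^m}\ \prod_{i=1}^m W^{k_i}(x,z_i)\,W^{k_i}(z_i,y)\ \mathrm{d}z_1\cdots\mathrm{d}z_m.$$
This makes the ``correlation'' among the auxiliary graphs explicit: all of them are assembled from half-paths hanging off a common tuple of midpoint variables $z_1,\dots,z_m$. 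Now fix a measurable set $S\subseteq[0,1]$ and set $g(z_1,\dots,z_m):=\int_S \prod_{i=1}^m W^{k_i}(x,z_i)\,\mathrm{d}x$. Substituting the display into $\int_{S\times S}W^{\Theta}$ and separating the $x$- and $y$-integrals (using symmetry of each $W^{k_i}$) gives $\int_{S\times S}W^{\Theta}(x,y)\,\mathrm{d}x\,\mathrm{d}y=\int_{[0,1]^m} g^2$, and Cauchy--Schwarz against the constant function on the probability space $[0,1]^m$ yields $\int_{[0,1]^m} g^2\ge\bigl(\int_{[0,1]^m} g\bigr)^2$.

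It remains to compute the mean of $g$. Since $W$ is $d$-regular, an easy induction gives that $W^{k}$ is $d^{k}$-regular for every $k\ge1$, whence $\int_0^1 W^{k_i}(x,z_i)\,\mathrm{d}z_i=d^{k_i}$ and therefore $\int_{[0,1]^m} g = \int_S \prod_{i=1}^m d^{k_i}\,\mathrm{d}x = d^{\sum_i k_i}\lambda(S)=d^{e(\Theta)/2}\lambda(S)$. Combining the last two estimates gives $\int_{S\times S} W^{\Theta}(x,y)\,\mathrm{d}x\,\mathrm{d}y\ge d^{e(\Theta)}\lambda(S)^2$, which is exactly $d^{e(\Theta)}$-local density. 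The only genuinely delicate point is the middle paragraph: recognizing that the Hadamard-product obstruction evaporates once each path is split at its \emph{midpoint} --- which is also where evenness of $\Theta$ is essential, since for odd paths one could only split at the central edge and the two ``halves'' would no longer pair into a perfect square.
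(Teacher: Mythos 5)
Your proof is correct, and it takes a genuinely different route from the paper's. The paper proceeds by induction on the number of paths via an auxiliary ``attachment'' lemma (\Cref{lem:aux_attaching_paths}): if $W_1$ is $d_1$-locally dense and $W_2$ is $d_2$-regular, then $W_1\cdot W_2^{2k}$ is $d_1d_2^{2k}$-locally dense. Each inductive step splits the newly attached even path at its midpoint and invokes Reiher's lemma (\Cref{lem:Reihers_lemma}) for the locally dense factor $W^{\Theta'}$ accumulated so far, followed by Jensen. You instead split \emph{all} paths at their midpoints simultaneously, which exposes $\int_{S\times S}W^{\Theta}$ as $\int_{[0,1]^m}g^2$ for a single nonnegative function $g$ of the midpoint tuple, so one application of Cauchy--Schwarz (equivalently Jensen for $x\mapsto x^2$) plus the $d^{k_i}$-regularity of $W^{k_i}$ finishes the job. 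Your argument is non-inductive, shorter, and entirely self-contained --- it needs no Reiher-type input at all, since the ``base'' locally dense factor is trivial. What the paper's route buys in exchange is the more general \Cref{lem:aux_attaching_paths}, which allows an arbitrary locally dense graphon $W_1$ as a factor (though within this paper that generality is only used to drive the induction itself, so your proof fully suffices for \Cref{lem:auxiliary_locally_dense} and everything downstream of it). All the steps you take --- the Hadamard factorization from internal disjointness, the interchange of integrals by Tonelli, the identity $\int_{[0,1]}W^{k}(x,z)\,\mathrm{d}z=d^{k}$ for almost every $x$ --- check out, and your closing remark correctly identifies where evenness is essential.
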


This lemma can be proved by inductively attaching even paths to the roots of an even generalized theta graph. The next lemma captures the essence of this inductive process.

\begin{lemma}\label{lem:aux_attaching_paths}
    Let $W_1$ be a $d_1$-locally dense graphon and $W_2$ be a $d_2$-regular graphon and $k \geq 1$ be an integer.
    Let $W(x, y) = W_1(x, y)W_2^{2k}(x, y)$. 
    Then $W$ is $d_1d_2^{2k}$-locally dense.
\end{lemma}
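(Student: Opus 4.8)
The plan is to exploit the factorisation of the even matrix power $W_2^{2k}$ as the composition $W_2^{k}\circ W_2^{k}$, which lets us view $\int_{S\times S}W(x,y)\,dx\,dy$ as an average over a ``middle vertex'' $z$ of weighted $K_2$-counts against the locally dense graphon $W_1$, to which Reiher's lemma (\Cref{lem:Reihers_lemma}) applies slice by slice.

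Concretely, write $g=\mathbf 1_S$ and use $W_2^{2k}(x,y)=\int_{[0,1]}W_2^{k}(x,z)W_2^{k}(z,y)\,dz$ together with the symmetry $W_2^k(z,y)=W_2^k(y,z)$. Swapping the order of integration gives
$$\int_{S\times S}W_1(x,y)W_2^{2k}(x,y)\,dx\,dy=\int_{[0,1]}\left(\int_{[0,1]^2}f_z(x)f_z(y)\,W_1(x,y)\,dx\,dy\right)dz,$$
where $f_z(x):=g(x)\,W_2^{k}(x,z)$. A quick check shows $W_2^{k}(x,z)\in[0,1]$ for $k\ge 1$ (since $W_2\le 1$ and its row-integrals equal $d_2\le 1$), so each $f_z:[0,1]\to[0,1]$ is an admissible weight function, and \Cref{lem:Reihers_lemma} bounds the inner integral below by $d_1\big(\int_{[0,1]}f_z\big)^2=d_1\big(\int_S W_2^{k}(x,z)\,dx\big)^2$. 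Hence the left-hand side is at least $d_1\int_{[0,1]}p(z)^2\,dz$, where $p(z):=\int_S W_2^{k}(x,z)\,dx$.

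It then remains to bound $\int_{[0,1]}p(z)^2\,dz$ from below. Since $\lambda([0,1])=1$, the Cauchy--Schwarz (power-mean) inequality gives $\int_{[0,1]} p^2\ge\big(\int_{[0,1]} p\big)^2$, and Fubini together with the $d_2$-regularity of $W_2$ — which yields $\int_{[0,1]}W_2^{k}(x,z)\,dz=d_2^{k}$ by an easy induction on $k$ — gives $\int_{[0,1]}p(z)\,dz=\int_S d_2^{k}\,dx=d_2^{k}\lambda(S)$. Combining these bounds yields $\int_{S\times S}W(x,y)\,dx\,dy\ge d_1 d_2^{2k}\lambda(S)^2$, which is the desired local density.

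I do not anticipate a serious obstacle; the only points requiring a little care are verifying that $W_2^{k}$ is $[0,1]$-valued so Reiher's lemma can be invoked, and recording that $d_2$-regularity propagates to $\int_{[0,1]}W_2^{k}(x,\cdot)=d_2^{k}$. The essential idea — and the reason the hypothesis asks for an \emph{even} power $2k$ rather than an arbitrary one — is that $2k$ splits into two equal halves, so the \emph{same} slice function $f_z$ appears on both sides of $W_1$; an odd power would force two distinct slice functions and Reiher's lemma would not apply in this symmetric form.
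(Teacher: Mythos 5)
Your proposal is correct and follows essentially the same route as the paper: factor $W_2^{2k}$ through a middle variable, apply \Cref{lem:Reihers_lemma} to the slice functions $f_z=\mathbf{1}_S\cdot W_2^k(\cdot,z)$, and finish with Jensen/Cauchy--Schwarz plus the identity $\int_{[0,1]}W_2^k(x,z)\,\mathrm{d}z=d_2^k$ from $d_2$-regularity. The side checks you flag (that $W_2^k$ is $[0,1]$-valued, and the inductive computation of the row integrals) are exactly the points the paper also records.
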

\begin{proof}
    We first note that as $W_2$ is $d_2$-regular, we have 
    \begin{align*}
        \int_{[0, 1]} W_2^{k}(x, y) \mathrm{d}y & = \int_{[0, 1]^{k}} W_2(x, y_1)W_2(y_1, y_2) \cdots W_2(y_{k-1}, y_k) \prod_{i \in [k]} \mathrm{d}y_i \\
        & = \int_{[0, 1]^{k-1}} W_2(x, y_1)W_2(y_1, y_2) \cdots W_2(y_{k-2}, y_{k-1}) \int_{[0, 1]} W_2(y_{k-1}, y_k) \mathrm{d}y_k \prod_{i \in [k-1]} \mathrm{d}y_i \\
        & = d_2 \int_{[0, 1]^{k-1}} W_2(x, y_1)W_2(y_1, y_2) \cdots W_2(y_{k-2}, y_{k-1}) \prod_{i \in [k-1]} \mathrm{d}y_i.
    \end{align*}
    Therefore, by induction, we have $\int_{[0, 1]} W_2^{k}(x, y) \mathrm{d}y = d_2^k$ for every $k \geq 1$ and almost every $x \in [0, 1]$.
    Let $S \subseteq [0, 1]$ be any measurable set with positive measure.
    Then we have 
    \begin{align*}
        \int_{S \times S} W(x, y) \mathrm{d}x\mathrm{d}y &= \int_{S \times S} W_1(x, y)W_2^{2k}(x, y) \mathrm{d}x\mathrm{d}y \\
        &= \int_{S \times S} W_1(x, y) \int_{[0, 1]} W_2^k(z, x)W_2^k(z, y) \mathrm{d}z\mathrm{d}x\mathrm{d}y \\
        & = \int_{[0, 1]}\int_{S \times S} W_2^k(z, x)W_1(x, y)W_2^k(z, y) \mathrm{d}x\mathrm{d}y\mathrm{d}z \\
        & \geq \int_{[0, 1]} d_1 \left(\int_S W_2^k(z, x)\mathrm{d}x\right)^2 \mathrm{d}z
    \end{align*}
    by applying \Cref{lem:Reihers_lemma} with $f( \cdot ) = W_2^k(z, \cdot) \cdot \mathbf{1}_{S}$.
    By using Jensen's inequality for $x \mapsto x^2$, we have 
    \begin{align*}
    d_1 \int_{[0, 1]} \left(\int_S W_2^k(z, x)\mathrm{d}x\right)^2 \mathrm{d}z & \geq d_1\left(\int_{[0, 1]} \int_S W_2^k(z, x) \mathrm{d}x \mathrm{d}z \right)^2  \\
    & = d_1 \left(\int_{S} \int_{[0, 1]} W_2^k(z, x) \mathrm{d}z \mathrm{d}x \right)^2 \\
    & = d_1 d_2^{2k}\lambda(S)^2.
    \end{align*}
    Therefore, $\int_{S \times S} W(x, y) \mathrm{d}x\mathrm{d}y \geq d_1d_2^{2k}\lambda(S)^2$ for any measurable set $S \subseteq [0, 1]$ which concludes the proof.
\end{proof}

\begin{proof}[Proof of \Cref{lem:auxiliary_locally_dense}]
    We apply the mathematical induction on the number of paths in $\Theta$.
    If $\Theta$ consists of a single path of length $2k$, then we apply \Cref{lem:aux_attaching_paths} with $W_1 \equiv 1$ and $W_2=W$. 
    Then $W^{\Theta} = W^k$ is $d^{2k}$-locally dense by \Cref{lem:aux_attaching_paths}.
    
    If $\Theta$ has more than one path, then let $\Theta'$ be a theta graph obtained by deleting one path of length $2k$ from $\Theta$.
    By the induction hypothesis, $W^{\Theta'}$ is $d^{e(\Theta')}$-locally dense.
    We apply \Cref{lem:aux_attaching_paths} with $W_1=W^{\Theta'}$ and $W_2=W$. Then the Hadamard product of $W^{\Theta'}$ and $W^{2k}$ is $d^{e(\Theta')}d^{2k}=d^{e(\Theta)}$-locally dense by \Cref{lem:aux_attaching_paths}. As the Hadamard product of $W^{\Theta'}$ and $W^{2k}$ is $W^{\Theta}$, it concludes the proof.
\end{proof}

We are now ready to prove \Cref{thm:main_theta_replacement}.
\begin{proof}[Proof of \Cref{thm:main_theta_replacement}]
    Let $H$ be a KNRS graph and $\Theta$ be an even generalized theta graph.
    Let $H'$ be a graph obtained by replacing each edge of $H$ by $\Theta$.
    Let $W$ be a $d$-regular graphon.
    Then by \Cref{lem:auxiliary_locally_dense}, $W^{\Theta}$ is $d^{e(\Theta)}$-locally dense. Therefore, $t_{H'}(W) = t_{H}(W^{\Theta}) \geq d^{e(\Theta)e(H)} = d^{e(H')}$ as $H$ is KNRS.
    Therefore, $H'$ is Sidorenko.
\end{proof}
We now prove \Cref{thm:main_semidirect_product}.
\begin{proof}[Proof of \Cref{thm:main_semidirect_product}]
    Let $W$ be a $d$-regular graphon.
    We label the vertices of $H_1$ by $1, 2, \ldots, h$ where $I=\{1, 2, \ldots, t\}$ and $a=h$.
    For each $x_1, \ldots, x_t \in [0, 1]$, we define the normalized number of embeddings of $H_1$ (with $1, 2, \ldots, t, $ and $h$ being fixed) by   
    $$f_{x_1, \ldots, x_t}^W(x_h) = \int_{[0, 1]^{h-t-1}} \prod_{ij \in E(H_1)} W(x_i, x_j) \prod_{t+1 \leq i \leq h-1} \mathrm{d}x_i.$$    
    Then we observe that 
    $$t_{F}(W) = \int_{[0, 1]^{t}} \left( \int_{[0, 1]^{v(H_2)}} \prod_{i \in [v(H_2)]} f_{x_1, \ldots, x_t}^W(y_i) \prod_{ij \in E(H_2)} W^{2k}(y_i, y_j) \prod_{i \in [v(H_2)]} \mathrm{d}y_i\right) \prod_{i \in [t]} \mathrm{d}x_i.$$
    As $W^{2k}$ is $d^{2k}$-locally dense by \Cref{lem:auxiliary_locally_dense} and $H_2$ is KNRS, we can apply \Cref{lem:Reiher_lemma_for_general_graph} to obtain 
    $$t_{F}(W) \geq \int_{[0, 1]^{t}} \left( \int_{[0, 1]} f_{x_1, \ldots, x_t}^W(z) \mathrm{d}z \right)^{v(H_2)} d^{2ke(H_2)} \prod_{i \in [t]} \mathrm{d}x_i.$$
    By Jensen's inequality for $x \mapsto x^{v(H_2)}$ and that $H_1$ is Sidorenko, we have 
    \begin{align*}
        t_{F}(W) &\geq d^{2ke(H_2)} \int_{[0, 1]^{t}} \left( \int_{[0, 1]} f_{x_1, \ldots, x_t}^W(z) \mathrm{d}z \right)^{v(H_2)}  \prod_{i \in [t]} \mathrm{d}x_i \\
        & \geq d^{2ke(H_2)} \left(\int_{[0, 1]^{t}} \int_{[0, 1]} f_{x_1, \ldots, x_t}^W(z) \mathrm{d}z \prod_{i \in [t]} \mathrm{d}x_i \right)^{v(H_2)} \\
        & \geq d^{2ke(H_2) + v(H_2)e(H_1)}.
    \end{align*}    
    Therefore, $F$ is Sidorenko.
\end{proof}

\subsection{Uniformization via H\"{o}lder's inequality}\label{subsec:Holder_technique}
This section's main lemma is the following, which allows us to reduce a non-uniform replacement case into a uniform case.
\begin{lemma}\label{lem:uniformization_via_Holder}
    Let $H$ be a graph with $V(H)=[h]$ and $H'$ be a graph obtained by replacing each edge $e$ of $H$ with internally vertex-disjoint paths of even lengths connecting the end vertices of $e$. Let $h_e(k)$ denote the number of length $k$ paths in the replacement of $e$ and let $\alpha_k = \sum_{e \in E(H)} h_e(k)/{h \choose 2}$.
    Then we have 
    $$t_{H'}(W) \geq \int_{[0, 1]^h}\prod_{k \geq 1} \prod_{ij \in {V(H) \choose 2}} W^k(x_i, x_j)^{\alpha_k} \prod_{i \in [h]} \mathrm{d}x_i.$$
\end{lemma}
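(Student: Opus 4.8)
The plan is to prove \Cref{lem:uniformization_via_Holder} by writing $t_{H'}(W)$ as an integral over the vertices of $H$ only (after integrating out the subdivision vertices), and then applying H\"older's inequality in the form of \Cref{thm:Holder} to redistribute the edge weights uniformly across all pairs in $\binom{V(H)}{2}$. First I would use \Cref{lem:relation_of_aux_and_original}-style bookkeeping: for each edge $e = ij \in E(H)$, integrating out the internal path vertices turns the product of $W$-weights along the $h_e(k)$ paths of length $k$ into $\prod_{k\ge 1} W^k(x_i,x_j)^{h_e(k)}$. Hence
\[
 t_{H'}(W) = \int_{[0,1]^h} \prod_{ij \in E(H)} \prod_{k \ge 1} W^k(x_i,x_j)^{h_e(k)} \prod_{i\in[h]} \mathrm{d}x_i,
\]
so the integrand is a product, over the actual edges of $H$, of the kernels $W^k$; I want to replace this by the ``symmetrized'' integrand in which every one of the $\binom{h}{2}$ pairs carries the average exponent $\alpha_k = \sum_{e} h_e(k)/\binom{h}{2}$.

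The heart of the argument is the H\"older step. I would set $m = \binom{h}{2}$ and index the family by injections (or by an automorphism-orbit / relabelling argument): for each bijection $\sigma$ from a copy of $E(K_h)$ that sends $E(H)$ into $\binom{[h]}{2}$, or more simply for each of the $m!/(\,\cdot\,)$ relabellings of $[h]$, one gets an integral equal to $t_{H'}(W)$ by symmetry of the integrand under permuting the $x_i$. Taking the geometric mean of these $m$ (suitably chosen) copies and applying \Cref{thm:Holder} with all $p_i$ equal — so that $\sum 1/p_i = 1$ forces $p_i = m$ and $r=1$ — turns the geometric mean of the $m$ integrals into a single integral whose integrand is the geometric mean of the $m$ permuted integrands. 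A short counting check shows that in that geometric mean, each pair $ij \in \binom{[h]}{2}$ receives the weight $\frac{1}{m}\sum_{e\in E(H)} h_e(k) = \alpha_k$ in the exponent of $W^k(x_i,x_j)$, which is exactly the claimed right-hand side; and since each of the $m$ integrals equals $t_{H'}(W)$, the geometric mean on the left is just $t_{H'}(W)$, giving the inequality $t_{H'}(W) \ge (\text{symmetrized integral})$.

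The main obstacle I anticipate is purely organizational rather than analytic: choosing the right indexing set of relabellings so that (i) each individual integral genuinely equals $t_{H'}(W)$ — this uses only that relabelling the vertices $[h]$ does not change the homomorphism density — and (ii) the multiset of pair-labels produced, counted with multiplicity over the index set, is the same for every pair $ij$, so that the averaged exponents come out uniformly as $\alpha_k$. Using all $h!$ permutations of $[h]$ and grouping by which pair an edge $e$ maps to handles (ii) cleanly by symmetry of $K_h$, at the cost of $p_i = h!$ rather than $m$; either normalization works since we only need $\sum 1/p_i = 1$. I also need to make sure the H\"older inequality is applied over the product space $[0,1]^h$ (treating the whole integrand as a function of the $h$-tuple), which is fine since \Cref{thm:Holder} is stated for $[0,1]$ but extends verbatim to any measure space, in particular $[0,1]^h$ with Lebesgue measure; I would remark on this rather than reprove it. A minor point to address is that $W^k(x_i,x_j)$ is nonnegative and bounded, so all integrals and geometric means are well-defined and the exponents $\alpha_k \ge 0$ cause no integrability issues.
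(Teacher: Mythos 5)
Your proposal is correct and follows essentially the same route as the paper: integrate out the internal path vertices to express $t_{H'}(W)$ as an integral over $[0,1]^h$ of $\prod_{ij\in E(H)}\prod_k W^k(x_i,x_j)^{h_{ij}(k)}$, take the product over all $h!$ relabellings of $[h]$, apply H\"older to pull the geometric mean inside the integral, and verify that each pair $ij$ ends up with exponent $2(h-2)!\sum_e h_e(k)/h! = \alpha_k$. The only differences are cosmetic (your parameterization of the H\"older exponents versus the paper's $p_k=1$, $r=1/h!$).
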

\begin{proof}
    By integrating variables corresponding to $V(H') \setminus V(H)$ first, we have
    $$ t_{H'}(W) = \int_{[0, 1]^h}\prod_{k \geq 1} \prod_{ij \in {V(H) \choose 2}} W^k(x_{i}, x_{j})^{h_{ij}(k)} \prod_{i \in [h]} \mathrm{d}x_i,$$
    where we set $h_{ij}(k)=0$ when $ij$ is not an edge of $H$.
    We now observe that for any permutation $\varphi: V(H) \rightarrow V(H)$, there holds
    $$ t_{H'}(W) = \int_{[0, 1]^h}\prod_{k \geq 1} \prod_{ij \in {V(H) \choose 2}} W^k(x_{\varphi(i)}, x_{\varphi(j)})^{h_{ij}(k)} \prod_{i \in [h]} \mathrm{d}x_i,$$
    as it is equivalent to the relabeling of vertices of $H$.
    By taking product over all permutations $\varphi:[h] \rightarrow [h]$ and applying H\"{o}lder's inequality(\Cref{thm:Holder}) with $p_k=1$ for all $k \in [h!]$ and $r=1/h!$, we have 
    \begin{align*}
        t_{H'}(W)^{h!} & = \prod_{\varphi:[h] \rightarrow [h]} \left(\int_{[0, 1]^h}\prod_{k \geq 1} \prod_{ij \in {V(H) \choose 2}} W^k(x_{\varphi(i)}, x_{\varphi(j)})^{h_{ij}(k)} \prod_{i \in [h]} \mathrm{d}x_i \right) \\
        & \geq \left(\int_{[0, 1]^h} \prod_{\varphi:[h] \rightarrow [h]}\prod_{uv \in E(H)} \prod_{k \geq 1} W^k(x_{\varphi(u)}, x_{\varphi(v)})^{h_{uv}(k)/h!}  \prod_{i \in [h]} \mathrm{d}x_i \right)^{h!}.
    \end{align*}
    Now fix distinct $i, j \in V(H)$ and $k\in\mathbb{N}$. Consider the power of $W^k(x_i, x_j)$ in the product $$\prod_{\varphi:[h] \rightarrow [h]}\prod_{uv \in E(H)}  W^k(x_{\varphi(u)}, x_{\varphi(v)})^{h_{uv}(k)/h!}.$$
    For any distinct $i', j' \in V(H)$, the number of permutations $\varphi$ that maps $i'$ to $i$ and $j'$ to $j$ is $(h-2)!$.
    As $W^k(x_i, x_j) = W^k(x_j, x_i)$, we have $2(h-2)!$ permutations such that $W^k(x_i, x_j) = W^k(x_{\varphi(i')}, x_{\varphi(j')})$.
    Therefore, the power of $W^k(x_i, x_j)$ in $\prod_{\varphi:[h] \rightarrow [h]}\prod_{uv \in E(H)} W^k(x_{\varphi(u)}, x_{\varphi(v)})^{h_{uv}(k)/h!}$ is $$\sum_{u'v' \in {V(H) \choose 2}} h_{u'v'}(k) 2(h-2)!/h! = \alpha_k.$$
    Hence we have 
    $$t_{H'}(W)^{h!} \geq \left(\int_{[0, 1]^h}\prod_{k \geq 1} \prod_{ij \in {V(H) \choose 2}} W^k(x_i, x_j)^{\alpha_k} \prod_{i \in [h]} \mathrm{d}x_i\right)^{h!},$$ 
    which completes the proof.
\end{proof}
We are now ready to prove \Cref{thm:main_non_uniform}.
\begin{proof}[Proof of \Cref{thm:main_non_uniform}]
    Let $H, H'$ be as in the statement and $W$ be a $d$-regular graphon.
    Let $\alpha_k = \sum_{e \in E(H)} h_e(k)/{h \choose 2}$. 
    Note that by the assumption, $\alpha_{2k-1}=0$ for every $k \geq 1$.
    By \Cref{lem:uniformization_via_Holder}, we have 
    \begin{align}
        t_{H'}(W) \geq \int_{[0, 1]^h}\prod_{k \geq 1} \prod_{ij \in {V(H) \choose 2}} W^{2k}(x_i, x_j)^{\alpha_{2k}} \prod_{i \in [h]} \mathrm{d}x_i. \label{eq:uniformization}
    \end{align}
    For the first case, if all of $\sum_{e \in E(H)} h_e(2k)$ is divisible by ${h \choose 2}$, then $\alpha_{2k}$ is an integer for all $k \geq 1$.
    Let $\Theta$ be a theta graph which consists of $\alpha_{2k}$ paths of length $2k$ for all $k\ge 1$ and let $F$ be a graph obtained by replacing each edge of $K_h$ by $\Theta$.
    Then by \Cref{thm:main_theta_replacement}, $F$ is Sidorenko. 
    Noting that $e(F)={h\choose 2}\sum_{k} \alpha_{2k}\cdot 2k=\sum_{k}\sum_{e \in E(H)} h_e(2k)=e(H')$, we then have 
    $$t_{H'}(W) \geq \int_{[0, 1]^h}\prod_{k \geq 1} \prod_{ij \in {V(H) \choose 2}} W^{2k}(x_i, x_j)^{\alpha_{2k}} \prod_{i \in [h]} \mathrm{d}x_i = t_F(W) \geq d^{e(F)} = d^{e(H')}.$$
    Hence $H'$ is Sidorenko.

    We now consider the second case. If $\sum_{e \in E(H)} h_e(2k)=0$ for all but one $k$ and $\sum_{e \in E(H)} h_e(2k) \geq \binom{h}{2}$ for such exceptional $k$, then $\alpha_{2k} \geq 1$ for such exceptional $k$ and $\alpha_{2k}=0$ for all other $k$.
    Then \eqref{eq:uniformization} becomes 
    $$t_{H'}(W) \geq \int_{[0, 1]^h} \left(\prod_{ij \in {V(H) \choose 2}} W^{2k}(x_i, x_j)\right)^{\alpha_{2k}} \prod_{i \in [h]} \mathrm{d}x_i$$
    for some $k \geq 1$.
    By applying Jensen's inequality with $x \mapsto x^{\alpha_k}$, we have 
    \begin{align*}
        \int_{[0, 1]^h} \left(\prod_{ij \in {V(H) \choose 2}} W^{2k}(x_i, x_j)\right)^{\alpha_{2k}} \prod_{i \in [h]} \mathrm{d}x_i &\geq \left(\int_{[0, 1]^h} \prod_{ij \in {V(H) \choose 2}} W^{2k}(x_i, x_j) \prod_{i \in [h]} \mathrm{d}x_i\right)^{\alpha_{2k}}\\
        &= t_{K_h}(W^{2k})^{\alpha_{2k}} \geq d^{e(H')},
    \end{align*}
    where the last inequality comes from the fact that $(2k-1)$-subdivision of $K_h$ is Sidorenko.
    Therefore, $H'$ is Sidorenko.
\end{proof}

\subsection{Flowers are KNRS}\label{subsec:proof_for_flowers}
In this section, we prove \Cref{thm:flower}.
\begin{proof}[Proof of \Cref{thm:flower}]
    Let $W$ be a $d$-locally dense graphon and let $H$ be a flower.
    Let $C_1, \ldots, C_{r+s}$ be cycles of $H$ and $v$ be the vertex that all cycles intersect where $C_1, \ldots, C_r$ has length $2\ell_1+1, \ldots, 2\ell_r+1$ and $C_{r+1}, \ldots, C_{r+s}$ has length $2\ell_{r+1}, \ldots, 2\ell_{r+s}$.
    Then we have 
    \begin{align*}
    t_H(W) &= \int_{[0, 1]}\left(\int_{[0, 1]^{2r}} \prod_{i \in [r]} W^{\ell_i}(x, y_i)W^{\ell_i}(x, y'_i) W(y_i, y'_i) \prod_{i \in [r]} \mathrm{d}y_i \mathrm{d}y'_i \right)\\
    &~~~~~~~~~~~\cdot \left(\int_{[0, 1]^{s}} \prod_{r+1 \leq i \leq r+s} W^{\ell_i}(x, z_i)^2 \prod_{r+1 \leq i \leq r+s} \mathrm{d}z_i \right) \mathrm{d}x.
    \end{align*}
    By \Cref{lem:Reihers_lemma}, we have 
    $$\int_{[0, 1]^{2}} W^{\ell_i}(x, y_i)W^{\ell_i}(x, y'_i) W(y_i, y'_i) \mathrm{d}y_i\mathrm{d}y'_i \geq d \left(\int_{[0, 1]} W^{\ell_i}(x, y) \mathrm{d}y \right)^2$$
    for every $x \in [0, 1]$ and $i \in [r]$.
    Also, by Jensen's inequality, $$\int_{[0, 1]^{s}} \prod_{r+1 \leq i \leq r+s} W^{\ell_i}(x, z_i)^2 \prod_{r+1 \leq i \leq r+s} \mathrm{d}z_i \geq \prod_{r+1 \leq i \leq r+s} \left(\int_{[0, 1]} W^{\ell_i}(x, y) \mathrm{d}y \right)^2.$$ 
    Thus, using Jensen's inequality again we have
    \begin{align*}t_H(W) & \geq d^r \int_{[0, 1]} \prod_{i \in [r]} \left(\int_{[0, 1]} W^{\ell_i}(x, y) \mathrm{d}y \right)^2 \prod_{r+1 \leq i \leq r+s} \left(\int_{[0, 1]} W^{\ell_i}(x, y) \mathrm{d}y \right)^2 \mathrm{d}x \\
    & \geq d^r \left(\int_{[0, 1]} \prod_{i \in [r+s]} \left(\int_{[0, 1]} W^{\ell_i}(x, y)\mathrm{d}y \right) \mathrm{d}x \right)^2  \\
    & = d^r \left(\int_{[0, 1]} \int_{[0, 1]^{r+s}} \prod_{i \in [r+s]}W^{\ell_i}(x, y_i) \prod_{i \in [r+s]}\mathrm{d}y_i \mathrm{d}x \right)^2.
    \end{align*}
    Let $F$ be a tree obtained by attaching $r+s$ paths of length $\ell_1, \ldots, \ell_{r+s}$ to a single vertex.
    Then we observe that $t_F(W) = \int_{[0, 1]} \int_{[0, 1]^{r+s}} \prod_{i \in [r+s]}W^{\ell_i}(x, y_i) \prod_{i \in [r+s]}\mathrm{d}y_i \mathrm{d}x$. Also, $F$ is Sidorenko as it is a tree. 
    Therefore, 
    $$t_H(W) \geq d^r t_F(W)^2 \geq d^r d^{2e(F)} = d^{r+\sum_{i \in [r+s]} 2\ell_i} = d^{e(H)},$$ 
    so $H$ is KNRS as desired.
\end{proof}

\section{Concluding remarks}\label{sec:concluding}
In this paper, we provide a large class of subdivisions that are Sidorenko. 
We believe that our results can be extended to a more general setting.
\begin{conj}\label{conj:sidorenko_subdivision}
    Let $H$ be a Sidorenko graph and $H'$ is a subdivision of $H$.
    Then $H'$ is Sidorenko whenever it is bipartite.
\end{conj}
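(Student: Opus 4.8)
The plan is to first reduce --- using the reduction of Sidorenko's conjecture to $d$-regular host graphons recorded in \Cref{sec:Preliminaries} --- to showing $t_{H'}(W)\ge d^{e(H')}$ for every $d$-regular graphon $W$. Integrating out the variables attached to the subdivision vertices and writing $L_{uv}$ for the length of the path that replaces the edge $uv$ (so $e(H')=\sum_{uv\in E(H)}L_{uv}$), this asks for
$$\int_{[0,1]^{V(H)}}\prod_{uv\in E(H)}W^{L_{uv}}(x_u,x_v)\prod_{i\in V(H)}\mathrm{d}x_i\ \ge\ d^{e(H')}.$$
The \emph{uniform} case $L_{uv}\equiv L$ is immediate from \Cref{lem:relation_of_aux_and_original}: $W^{L}$ is again a graphon (symmetric and $[0,1]$-valued), and $t_{K_2}(W^L)=\int_{[0,1]^2}W^{L}=d^{L}$ since $W$ is $d$-regular, so $t_{H'}(W)=t_{H}(W^{L})\ge t_{K_2}(W^L)^{e(H)}=d^{L\,e(H)}=d^{e(H')}$ because $H$ is Sidorenko. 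As the uniform subdivision of a bipartite graph stays bipartite, this already settles a genuine special case of the conjecture; hence all the difficulty lies in \emph{non-uniform} lengths.

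Next I would record the structural constraint forced by bipartiteness of $H'$: the set of edges of $H$ subdivided an odd number of times is orthogonal to the cycle space of $H$, so there is a partition $V(H)=A\sqcup B$ for which every edge inside $A$ or inside $B$ is replaced by an \emph{even}-length path and every edge between $A$ and $B$ by an \emph{odd}-length path. For an edge $uv$ inside a part one has $W^{L_{uv}}=W^{m}W^{m}$ with $L_{uv}=2m$, so these factors sandwich a free midpoint variable; the natural attempt is to peel them off with the local-density / weighted-Reiher machinery of \Cref{subsec:auxiliary_graph}, in the spirit of the proof of \Cref{thm:main_semidirect_product}, which would leave an instance built only from the cross edges of $H$, on which one could hope to invoke that $H$ is Sidorenko. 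The sticking point is that the cross edges still carry non-uniform odd lengths, so this does not close the argument --- it merely relocates the difficulty to the bipartite ``core''.

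The main obstacle, and the reason this remains a conjecture, is precisely the non-uniformity between \emph{structurally inequivalent} edges. The Hölder uniformization of \Cref{lem:uniformization_via_Holder} symmetrizes over \emph{all} pairs in $\binom{V(H)}{2}$, hence only ever reduces to a theta-replacement of $K_h$, where one is stuck with the divisibility hypothesis of \Cref{thm:main_non_uniform}; symmetrizing instead only over $\mathrm{Aut}(H)$ keeps the shape of $H$ but equalizes lengths only within automorphism orbits, so it is useless when $H$ is asymmetric. What one would really want is a ``Sidorenko analogue'' of the weighted lemma \Cref{lem:Reiher_lemma_for_general_graph} allowing different auxiliary kernels on different edges of $H$; but a Sidorenko graph need not be KNRS, so no such statement holds verbatim, and the argument must exploit that the kernels here are powers of a single regular $W$. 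Concretely, writing $W=dJ+W_0$ with $J$ the all-$1$ graphon (so $W^{L}=d^{L}J+W_0^{L}$, since $W_0$ annihilates the constants), expanding the integral reduces the claim to the sign condition
$$\sum_{\substack{\emptyset\ne F\subseteq E(H)\\ \text{every vertex meets }0\text{ or }\ge 2\text{ edges of }F}}\Big(\prod_{uv\notin F}d^{L_{uv}}\Big)\int_{[0,1]^{V(H)}}\prod_{uv\in F}W_0^{L_{uv}}(x_u,x_v)\prod_{i}\mathrm{d}x_i\ \ge\ 0.$$
For $H$ a tree the sum is empty, and for $H$ an even cycle its single term is $\sum_i\mu_i^{\,\sum_e L_e}\ge 0$ because the powers of $W_0$ commute and $\sum_e L_e$ is even --- recovering the known cases. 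Controlling the signs of these correlation-type terms for a general Sidorenko $H$, where $F$ can be a complicated subgraph on which powers of $W_0$ no longer collapse, is, I expect, the crux; a sensible first target is the two-lengths case, or vertex-transitive $H$, which may already require a genuinely new idea beyond the techniques used here.
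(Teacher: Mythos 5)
The statement you were asked to prove is \Cref{conj:sidorenko_subdivision}, which is an \emph{open conjecture} in the paper: the authors offer no proof of it, only the uniform special case (\Cref{conj:sidorenko_subdivision} restricted to $\ell$-subdivisions, stated as a Proposition immediately afterwards) and the case of odd generalized theta graphs. Your proposal correctly recognises this. The only part you actually prove --- the uniform case via $t_{H'}(W)=t_H(W^{L})\ge t_{K_2}(W^{L})^{e(H)}=d^{Le(H)}$ for $d$-regular $W$ --- is exactly the paper's argument for that Proposition, so on the portion where a proof exists you and the paper coincide. The remainder of your write-up is an analysis of obstacles rather than a proof, and since the general statement is open, that is the appropriate outcome; there is no proof in the paper for you to have missed.

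Your supplementary reductions are essentially sound and worth keeping. The decomposition $W=dJ+W_0$ with $JW_0=W_0J=0$ (valid because $W$ is $d$-regular), the resulting identity $W^{L}=d^{L}J+W_0^{L}$, and the observation that any $F\subseteq E(H)$ whose support has a vertex of degree one contributes zero, correctly reduce the conjecture to the stated sign condition; the tree and even-cycle verifications are also correct. Two small points. First, a parity slip: bipartiteness of $H'$ forces the set of edges of $H$ that are replaced by \emph{odd-length} paths (i.e.\ subdivided an \emph{even} number of times) to lie in the cut space of $H$; you attribute this to the edges subdivided an odd number of times, though the partition $V(H)=A\sqcup B$ you then describe (even-length paths inside parts, odd-length across) is the correct consequence. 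Second, your remark that \Cref{lem:uniformization_via_Holder} ``only ever reduces to $K_h$'' is accurate and is precisely why \Cref{thm:main_non_uniform} carries its divisibility hypothesis; a symmetrisation adapted to $H$ itself rather than to $\binom{V(H)}{2}$ is indeed one of the missing ingredients. In short: no gap attributable to you --- the gap is the conjecture itself.
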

We remark that the uniform case of this conjecture can be proved easily.
\begin{prop}
    Let $H$ be a Sidorenko graph and $H'$ be an $\ell$-subdivision of $H$.
    Then $H'$ is Sidorenko.
\end{prop}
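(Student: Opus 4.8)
The plan is to pass through the ``$(\ell+1)$-st power'' graphon $W^{\ell+1}$ and play the Sidorenko property of $H$ against the Sidorenko property of paths. Write $\ell'=\ell+1$, so that $H'$ is precisely the graph obtained from $H$ by replacing every edge with a path of $\ell'$ edges, the two roots being the endpoints of the path; this rooted graph admits the automorphism that reverses the path, so \Cref{lem:relation_of_aux_and_original} applies and gives $t_{H'}(W)=t_{H}(W^{\ell'})$ for every graphon $W$.

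The key steps, in order, are: (i) check that $W^{\ell'}$ is a genuine graphon — it is $[0,1]$-valued since it is an average of products of values of $W$, and symmetric since a path has a root-swapping automorphism; (ii) apply the hypothesis that $H$ is Sidorenko to the graphon $W^{\ell'}$, obtaining $t_{H}(W^{\ell'})\ge t_{K_2}(W^{\ell'})^{e(H)}$; (iii) identify $t_{K_2}(W^{\ell'})=\int_{[0,1]^2}W^{\ell'}(x,y)\,\mathrm dx\,\mathrm dy$ as the homomorphism density $t_{P}(W)$ of the path $P$ with $\ell'$ edges (this is again \Cref{lem:relation_of_aux_and_original}, now with $K_2$ in place of $H$); (iv) use that paths are Sidorenko (Sidorenko's original result for trees) to get $t_{P}(W)\ge t_{K_2}(W)^{\ell'}$; (v) combine, using $e(H')=\ell'\,e(H)$, to conclude
$$t_{H'}(W)=t_{H}(W^{\ell'})\ \ge\ t_{K_2}(W^{\ell'})^{e(H)}=t_{P}(W)^{e(H)}\ \ge\ t_{K_2}(W)^{\ell'\,e(H)}=t_{K_2}(W)^{e(H')}.$$

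I do not expect a genuine obstacle here; the two things worth double-checking are that $W^{\ell'}$ really is symmetric (so that \Cref{lem:relation_of_aux_and_original} applies and $W^{\ell'}$ is a graphon, not merely a kernel) and that the conclusion is not vacuous, i.e.\ $H'$ is bipartite whenever $H$ is — which holds since every cycle of $H$ has even length and subdividing multiplies each cycle length by $\ell'$, keeping it even. An alternative, equally short route would be to first reduce to $d$-regular host graphons (by the biregularity reduction for Sidorenko's conjecture quoted above), observe via the computation in the proof of \Cref{lem:aux_attaching_paths} that $W^{\ell'}$ is then $d^{\ell'}$-regular, and apply the Sidorenko property of $H$ to it directly; the version above is slightly cleaner since it only needs that paths are Sidorenko and is uniform in the parity of $\ell$.
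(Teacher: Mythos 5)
Your proof is correct and is essentially the paper's own argument: both pass to the path-counting kernel $W^{\ell+1}$, apply the Sidorenko property of $H$ to that graphon, and then invoke the Sidorenko property of the length-$(\ell+1)$ path to bound $t_{K_2}(W^{\ell+1})$. The only (immaterial) difference is that the paper first restricts to $d$-regular $W$ via the biregularity reduction, whereas your main route works with an arbitrary graphon directly.
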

\begin{proof}
    Let $W$ be a $d$-regular graphon.
    Then we obtain 
    \begin{align*}
        t_{H'}(W)  = \int_{[0, 1]^{v(H)}} \prod_{ij \in E(H)} W^{\ell+1}(x_i, x_j) \prod_{i \in [v(H)]} \mathrm{d}x_i  \geq t_{K_2}(W^{\ell+1})^{e(H)} \geq d^{e(H')},
    \end{align*}
    where the last inequality comes from the fact that the path of length $\ell+1$ is Sidorenko.
\end{proof}

Another special case of \Cref{conj:sidorenko_subdivision} is to consider {\it odd generalized theta graphs}. We say a simple graph $H$ is an odd generalized theta graph if it can be obtained by adding internally disjoint odd paths between two fixed root vertices $u$ and $v$. The Sidorenko property of odd generalized theta graphs can be derived from 
the result by Li and Szegedy~\cite{Li_logarimic_calculus_2011} or from the strong tree-decomposition result by Conlon-Kim-Lee-Lee~\cite{Conlon_Sidorenko_subdivision_2018}. 

\begin{prop}\label{prop:odd-theta}
    Every odd generalized theta graph is Sidorenko.
\end{prop}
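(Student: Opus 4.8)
\medskip
\noindent\textbf{Proof proposal.}
The plan is to derive \Cref{prop:odd-theta} from an external structural result rather than from the locally-dense auxiliary-graphon machinery of \Cref{subsec:auxiliary_graph}: concretely, from the strong tree-decomposition theorem of Conlon, Kim, Lee and Lee~\cite{Conlon_Sidorenko_subdivision_2018}, or equivalently from the logarithmic-calculus framework of Li and Szegedy~\cite{Li_logarimic_calculus_2011}. The machinery of \Cref{subsec:auxiliary_graph} is unavailable here because it is tailored to \emph{even}-length paths: \Cref{lem:aux_attaching_paths,lem:auxiliary_locally_dense} show that $W^k$ is locally dense when $k$ is even, and this genuinely fails for odd $k$ (if $W$ is a bipartite graphon then so is $W^k$ for odd $k$, so $W^k$ is not locally dense). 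I would first handle the small cases: if the odd generalized theta graph $\Theta$ has a single path it is a path, hence a tree, and is Sidorenko~\cite{Sidorenko1991}; if it has exactly two paths, of lengths $2m_1+1$ and $2m_2+1$, then $\Theta$ is the even cycle $C_{2(m_1+m_2+1)}$, which is Sidorenko~\cite{Sidorenko1991}. So I may assume $\Theta$ has roots $u,v$ joined by $t\ge 3$ internally disjoint paths $P_1,\dots,P_t$ of odd lengths $2m_1+1,\dots,2m_t+1$; since every path has odd length, $u$ and $v$ lie in opposite parts of the (unique) bipartition of $\Theta$.

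For the general case I would exhibit $\Theta$ through a tree decomposition to which the theorem of~\cite{Conlon_Sidorenko_subdivision_2018} applies. Take the star-shaped decomposition whose central bag is $\{u,v\}$ and whose $i$-th leaf bag is $V(P_i)$; every adhesion set equals $\{u,v\}$. The torso of the central bag is the single edge $uv$, and the torso of the $i$-th leaf bag is the graph obtained from $P_i$ by adding the chord $uv$, i.e.\ the \emph{even} cycle $C_{2m_i+2}$ with $u,v$ designated as an adjacent boundary pair. Equivalently, $\Theta$ is built by gluing along $\{u,v\}$ a family of even cycles, one chord of each being identified with the common edge $uv$; yet another way to see the same structure is that $\Theta$ consists of two spiders $S_u,S_v$ with leg lengths $m_1,\dots,m_t$ rooted at $u$ and $v$, joined by a perfect matching between their leaf sets, and both spiders are trees (hence Sidorenko), which is the presentation most natural for the Li--Szegedy calculus.

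It then remains to verify the hypotheses of the tree-decomposition theorem for this decomposition, namely that each torso satisfies the strengthened, boundary-rooted form of Sidorenko's inequality relative to its adhesion set $\{u,v\}$. This is trivial for the central torso (a single edge), and for each leaf torso it should follow from the fact that even cycles are weakly norming, which yields precisely the required rooted counting inequality for $C_{2m_i+2}$ with respect to the adjacent pair $\{u,v\}$; one also checks that these boundary markings are consistent with the bipartition of $\Theta$ (recorded above). Given this, the theorem — or the Li--Szegedy calculus applied to the two-spiders-plus-matching presentation — yields $t_\Theta(W)\ge t_{K_2}(W)^{e(\Theta)}$ for every graphon $W$, so $\Theta$ is Sidorenko. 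The one genuine obstacle is exactly this verification: one must confirm that the even-cycle torsos meet the rooted hypotheses of the external theorem, and this uses the weakly-norming property of even cycles — a feature not shared by odd paths, which is the structural reason the argument cannot be run inside the even-path framework of \Cref{thm:main_theta_replacement}.
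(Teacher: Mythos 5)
Your overall plan---reduce the statement to the strong tree-decomposition theorem of Conlon--Kim--Lee--Lee~\cite{Conlon_Sidorenko_subdivision_2018} (or to Li--Szegedy~\cite{Li_logarimic_calculus_2011})---is the same as the paper's, and your handling of the one- and two-path cases is correct. The gap is that the specific decomposition you propose does not satisfy the hypotheses of the theorem you invoke, and the mechanism you suggest for verifying them is not part of that theorem. In a strong tree decomposition in the sense of~\cite{Conlon_Sidorenko_subdivision_2018}, each bag must induce a \emph{tree of $H$ itself}, and adjacent bags must be compatible on their common vertices; the whole difficulty is to arrange that the two shared vertices of adjacent bags are joined inside each bag and at matching distances. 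Your central bag $\{u,v\}$ induces no edge at all unless some path has length one, and the roots lie at distance $2m_i+1$ inside the $i$-th leaf bag while being disconnected inside the central bag, so this decomposition is not strong. There is no ``torso'' operation in the theorem: you cannot add the chord $uv$ to a leaf bag, view it as an even cycle, and then feed a weakly-norming/rooted inequality for that cycle into the tree-decomposition machinery. The step you defer as ``it then remains to verify the hypotheses'' is therefore not a verification but the actual missing content, and for your choice of bags it fails. (The same objection applies to the two-spiders-plus-matching presentation: asserting that it is ``natural for the Li--Szegedy calculus'' is not an argument, and splitting each path at its middle edge again produces adhesion pairs that are far apart in one piece and adjacent or disconnected in the other.)

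What the paper actually does is choose the bags so that the compatibility condition holds by construction. Order the odd lengths $\ell_1\ge\cdots\ge\ell_k$. The central bag $T_0$ is a spider rooted at $s=u$ consisting of the \emph{entire} shortest path (length $\ell_k$, ending at $t=v$) together with the initial segment of length $\tfrac{\ell_i-\ell_k}{2}$ of each longer path, ending at a vertex $x_i$; the $i$-th leaf bag $T_i$ is the remaining segment of that path, a path of length $\tfrac{\ell_i+\ell_k}{2}$ from $x_i$ to $t$. Every bag induces a tree, the decomposition tree is a star centred at $T_0$, and $V(T_0)\cap V(T_i)=\{t,x_i\}$ with
\[
d_{T_0}(t,x_i)=\ell_k+\tfrac{\ell_i-\ell_k}{2}=\tfrac{\ell_i+\ell_k}{2}=d_{T_i}(t,x_i),
\]
so the decomposition is strong and Theorem 1.2 of~\cite{Conlon_Sidorenko_subdivision_2018} applies directly. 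If you want to salvage your write-up, replace your star-over-$\{u,v\}$ decomposition with this one; otherwise you would need to prove from scratch a gluing lemma of the form ``identifying graphs along a pair of vertices whose torsos are weakly norming preserves the Sidorenko property,'' which is not available in the cited literature.
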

\begin{proof}[Proof Sketch]
    For $k\geq 2$ and odd integers $\ell_1\geq \ell_2\geq \cdots\geq \ell_k\geq 1$, let $T_0$ be a tree obtained by attaching $k$ paths of lengths $\ell_k, \frac{\ell_1-\ell_k}{2}, \frac{\ell_2-\ell_k}{2},\ldots,\frac{\ell_{k-1}-\ell_k}{2}$ to a vertex $s$. Assume the other ends of the paths are $t,x_1,x_2,\ldots,x_{k-1}$, respectively. Particularly, if $\ell_i=\ell_k$ for some $i\in [1,k-1]$, then $x_i=s$. Let $T_i$ be a path of length $\frac{\ell_i+\ell_k}{2}$ connecting $t$ and $x_i$ such that the internal vertices of $T_i$ is disjoint with $\cup_{j=0}^{i-1} V(T_j)$. Let $\Theta=\cup_{i=0}^{k-1} T_i$. Obviously, $\Theta$ is an odd generalized theta graph for which those paths connecting two roots are of lengths $\ell_1,\ell_2,\ldots,\ell_k$, respectively. Let $\mathcal{F}=\{T_0,T_1,\ldots,T_{k-1}\}$ and let $\mathcal{T}$ be a $(k-1)$-edge star on $\mathcal{F}$ with $T_0$ being the $(k-1)$-degree vertex. Then $(\mathcal{F},\mathcal{T})$ is a tree decomposition of $\Theta$. Such a decomposition indicates that $\Theta$ is strongly tree-decomposable and therefore is Sidorenko according to Theorem 1.2 in the paper of Conlon-Kim-Lee-Lee~\cite{Conlon_Sidorenko_subdivision_2018}.
\end{proof}
%The above decomposition of odd generalized theta graphs is pointed out to us by Noel \cite{Noel2024}. 

\section*{Acknowledgements}
We would like to thank Joonkyung Lee for fruitful discussions and thank Jon Noel for telling us about \Cref{prop:odd-theta}.

\bibliographystyle{abbrv}
\bibliography{references}

\begin{thebibliography}{10}

\bibitem{bradac_counting_2024}
D.~Bradač, B.~Sudakov, and Y.~Wigderson.
\newblock Counting subgraphs in locally dense graphs, 2024.
\newblock arXiv:2406.12418.

\bibitem{chen_kohayakawa-nagle-rodl-schacht_2024}
H.~Chen, Y.~Lin, and J.~Ma.
\newblock Kohayakawa--{Nagle}--{R}\"{o}dl--{Schacht} conjecture for subdivisions, 2024.
\newblock arXiv:2407.10861.

\bibitem{Chung_quasirandom_1989}
F.~R.~K. Chung, R.~L. Graham, and R.~M. Wilson.
\newblock Quasi-random graphs.
\newblock {\em Combinatorica}, 9(4):345--362, 1989.

\bibitem{Conlon_approximate_sidorenko_2010}
D.~Conlon, J.~Fox, and B.~Sudakov.
\newblock An approximate version of {S}idorenko's conjecture.
\newblock {\em Geom. Funct. Anal.}, 20(6):1354--1366, 2010.

\bibitem{Conlon_Sidorenko_subdivision_2018}
D.~Conlon, J.~H. Kim, C.~Lee, and J.~Lee.
\newblock Some advances on {S}idorenko's conjecture.
\newblock {\em J. Lond. Math. Soc. (2)}, 98(3):593--608, 2018.

\bibitem{Conlon_sidorenko_blowup_2021}
D.~Conlon and J.~Lee.
\newblock Sidorenko's conjecture for blow-ups.
\newblock {\em Discrete Anal.}, pages Paper No. 2, 13, 2021.

\bibitem{Coregliano_induced_sidorenko_2024}
L.~N. Coregliano.
\newblock Left-cut-percolation and induced-{S}idorenko bigraphs.
\newblock {\em SIAM J. Discrete Math.}, 38(2):1586--1629, 2024.

\bibitem{coregliano_biregularity_2021}
L.~N. Coregliano and A.~A. Razborov.
\newblock Biregularity in {Sidorenko}'s {conjecture}, 2021.
\newblock arXiv:2108.06599.

\bibitem{Erdos_sidorenkoconj_1984}
P.~Erd\H{o}s and M.~Simonovits.
\newblock Cube-supersaturated graphs and related problems.
\newblock In {\em Progress in graph theory ({W}aterloo, {O}nt., 1982)}, pages 203--218. Academic Press, Toronto, ON, 1984.

\bibitem{Hatami_graph_norms_2010}
H.~Hatami.
\newblock Graph norms and {S}idorenko's conjecture.
\newblock {\em Israel J. Math.}, 175:125--150, 2010.

\bibitem{Kim_sidorenko_product_2016}
J.~H. Kim, C.~Lee, and J.~Lee.
\newblock Two approaches to {S}idorenko's conjecture.
\newblock {\em Trans. Amer. Math. Soc.}, 368(7):5057--5074, 2016.

\bibitem{Kohayakawa_KNRS_2010}
Y.~Kohayakawa, B.~Nagle, V.~R\"odl, and M.~Schacht.
\newblock Weak hypergraph regularity and linear hypergraphs.
\newblock {\em J. Combin. Theory Ser. B}, 100(2):151--160, 2010.

\bibitem{kral2024common}
D.~Kr{\'a}\v{l}, J.~Volec, and F.~Wei.
\newblock Common graphs with arbitrary chromatic number, 2024.
\newblock arXiv:2206.05800.

\bibitem{Lee_KNRS_2021}
J.~Lee.
\newblock On some graph densities in locally dense graphs.
\newblock {\em Random Struct Alg.}, 58(2):322--344, 2021.

\bibitem{Li_logarimic_calculus_2011}
J.~L.~X. Li and B.~Szegedy.
\newblock On the logarithimic calculus and sidorenko's conjecture, 2011.
\newblock arXiv:1107.1153.

\bibitem{Lovasz_graph_limits_2012}
L.~Lov\'asz.
\newblock {\em Large networks and graph limits}, volume~60 of {\em American Mathematical Society Colloquium Publications}.
\newblock American Mathematical Society, Providence, RI, 2012.

\bibitem{Reiher_KNRS_oddcycle_2014}
C.~Reiher.
\newblock Counting odd cycles in locally dense graphs.
\newblock {\em J. Combin. Theory Ser. B}, 105:1--5, 2014.

\bibitem{Sidorenko1991}
A.~Sidorenko.
\newblock Inequalities for functionals generated by bipartite graphs.
\newblock {\em Diskret. Mat.}, 3(3):50--65, 1991.

\bibitem{Sidorenko1993}
A.~Sidorenko.
\newblock A correlation inequality for bipartite graphs.
\newblock {\em Graphs Combin.}, 9(2):201--204, 1993.

\bibitem{Szegedy_informationtheoretic_sidorenko_2015}
B.~Szegedy.
\newblock An information theoretic approach to {S}idorenko's conjecture, 2015.
\newblock arXiv:1406.6738.

\bibitem{Szegedy_entropy_2015}
B.~Szegedy.
\newblock Sparse graph limits, entropy maximization and transitive graphs, 2015.
\newblock arXiv:1504.00858.

\end{thebibliography}
\end{document}